\renewcommand\footnotemark{}
\begin{document}

\title{Zeros of Jensen polynomials and  asymptotics for the  Riemann xi function}

\author{
  Cormac ~O'Sullivan\footnote{{\it Date:} Nov 29, 2020.
\newline \indent \ \ \
  {\it 2020 Mathematics Subject Classification:} 11M26, 11M06, 41A60.
  \newline \indent \ \ \
Support for this project was provided by a PSC-CUNY Award, jointly funded by The Professional Staff Congress and The City
\newline \indent \ \ \
University of New York.}
  }

\date{}

\maketitle

\def\s#1#2{\langle \,#1 , #2 \,\rangle}

\def\F{{\frak F}}
\def\C{{\mathbb C}}
\def\R{{\mathbb R}}
\def\Z{{\mathbb Z}}
\def\Q{{\mathbb Q}}
\def\N{{\mathbb N}}
\def\G{{\Gamma}}
\def\GH{{\G \backslash \H}}
\def\g{{\gamma}}
\def\L{{\Lambda}}
\def\ee{{\varepsilon}}
\def\K{{\mathcal K}}
\def\Re{\mathrm{Re}}
\def\Im{\mathrm{Im}}
\def\PSL{\mathrm{PSL}}
\def\SL{\mathrm{SL}}
\def\Vol{\operatorname{Vol}}
\def\lqs{\leqslant}
\def\gqs{\geqslant}
\def\sgn{\operatorname{sgn}}
\def\res{\operatornamewithlimits{Res}}
\def\li{\operatorname{Li_2}}
\def\lip{\operatorname{Li}'_2}
\def\pl{\operatorname{Li}}

\def\clp{\operatorname{Cl}'_2}
\def\clpp{\operatorname{Cl}''_2}
\def\farey{\mathscr F}

\newcommand{\stira}[2]{{\genfrac{[}{]}{0pt}{}{#1}{#2}}}
\newcommand{\stirb}[2]{{\genfrac{\{}{\}}{0pt}{}{#1}{#2}}}
\newcommand{\norm}[1]{\left\lVert #1 \right\rVert}

\newcommand{\e}{\eqref}


\newtheorem{theorem}{Theorem}[section]
\newtheorem{lemma}[theorem]{Lemma}
\newtheorem{prop}[theorem]{Proposition}
\newtheorem{conj}[theorem]{Conjecture}
\newtheorem{cor}[theorem]{Corollary}
\newtheorem{assume}[theorem]{Assumptions}
\newtheorem{adef}[theorem]{Definition}


\newcounter{counrem}
\newtheorem{remark}[counrem]{Remark}

\renewcommand{\labelenumi}{(\roman{enumi})}
\newcommand{\spr}[2]{\sideset{}{_{#2}^{-1}}{\textstyle \prod}({#1})}
\newcommand{\spn}[2]{\sideset{}{_{#2}}{\textstyle \prod}({#1})}

\numberwithin{equation}{section}

\let\originalleft\left
\let\originalright\right
\renewcommand{\left}{\mathopen{}\mathclose\bgroup\originalleft}
\renewcommand{\right}{\aftergroup\egroup\originalright}

\bibliographystyle{alpha}

\begin{abstract}
The classical criterion of Jensen for the Riemann hypothesis is that all of the associated Jensen polynomials have only real zeros. We find a new version of this criterion, using linear combinations of Hermite polynomials, and  show that this condition holds in many cases. Detailed asymptotic expansions  are given for the required Taylor coefficients of the xi function at $1/2$ as well as related quantities. These results build on those in the recent paper of Griffin, Ono, Rolen and Zagier.
\end{abstract}

\section{Introduction}
The {\em Riemann xi function},
$
  \xi(s) := {\textstyle \frac 12} s(s-1)\pi^{-s/2} \G(s/2)\zeta(s),
$
is entire of order $1$. It satisfies $\xi(1-s)=\xi(s)$ so that its development about the central point $1/2$ is
\begin{equation} \label{xip}
  \xi(s) = \sum_{n=0}^\infty \frac{\xi^{(2n)}(1/2)}{(2n)!} (s-1/2)^{2n}.
\end{equation}
The function $\Xi(z)$ is defined as $\xi(1/2+i z)$. In our context it is useful to define another function $$\Theta(z):=\xi(1/2+\sqrt{z})$$ which is  entire of order $1/2$. The Riemann hypothesis is equivalent to $\Xi(z)$  having only real zeros and to $\Theta(z)$ having only negative real zeros. The central point is now at $z=0$ and we write
\begin{equation} \label{xi1}
  \Theta(z) = \sum_{m=0}^\infty \frac{\g(m)}{m!} z^{m}, \qquad \qquad \g(m) :=  \frac{m!}{(2m)!} \xi^{(2m)}(1/2).
\end{equation}
Note that $\xi^{(2m)}(1/2)$ is positive for all $m$ since, as seen in \e{mom2}, it may be expressed as the integral of a positive function. Hence any real zeros of $\Theta(z)$ are necessarily negative. We also see that $\Theta(z)$ is {\em real} which just means it maps $\R$ into $\R$.
It is convenient to label a function as {\em hyperbolic} if all of its zeros are real.

Jensen describes  the results of his research into the zeros of functions in \cite{Je13}. For any real entire function $F(z)$ of genus at most $1$, he associated a family of polynomials, now called Jensen polynomials, and showed that they are all hyperbolic if and only if $F(z)$ is hyperbolic. He applied this idea to $\Xi(z)$ in \cite[p. 189]{Je13}, giving a criterion for the Riemann hypothesis, and developed further equivalent conditions for hyperbolicity in \cite{Je13} and unpublished work; see the discussion in \cite{Po27}.

Following \cite{DL,GORZ} we define the {\em Jensen polynomial of degree $d$ and shift $n$}  as
\begin{equation} \label{jensen}
  J^{d,n}(X):=\sum_{j=0}^d \binom{d}{j}\g(n+j) X^j.
\end{equation}
This is  associated to the $n$th derivative $\Theta^{(n)}(z)$, and
as we review in Corollary \ref{cdo},  $\Theta^{(n)}(z)$ is  hyperbolic  if and only if $J^{d,n}(X)$  is hyperbolic for all $d \gqs 1$.  Also, if $\Theta(z)$ is hyperbolic then all of its derivatives must be hyperbolic as well; see Corollary \ref{cdo2}. Hence we obtain the following extended criterion which has presumably been known since the time of  Jensen.

\begin{theorem} \label{rh}
The Riemann hypothesis is true if and only if  $J^{d,n}(X)$ is   hyperbolic for all $d \gqs 1$, $n \gqs 0$.
\end{theorem}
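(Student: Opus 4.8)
The plan is to derive the theorem directly from the two corollaries recalled above, together with the standard reformulation of the Riemann hypothesis as the hyperbolicity of $\Theta$. The argument splits into the two implications, and essentially all of the real work has already been packaged into Corollaries \ref{cdo} and \ref{cdo2}; indeed the statement is really the classical shift-$0$ Jensen criterion, with the extension to all $n \gqs 0$ coming for free.

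First I would record the structural facts needed to invoke Jensen's criterion: $\Theta(z)$ is a real entire function of order $1/2$, hence of genus $0$, and since differentiation does not increase the order, every derivative $\Theta^{(n)}(z)$ is likewise real and of genus $0$. In particular Jensen's theory applies to $\Theta$ and to all of its derivatives. I would also note the bookkeeping identity that the Taylor coefficients of $\Theta^{(n)}(z)$ at $z=0$ are $\g(n+k)/k!$, so that the degree-$d$, shift-$0$ Jensen polynomial of $\Theta^{(n)}$ is exactly the polynomial $J^{d,n}(X)$ of \eqref{jensen}. Corollary \ref{cdo} then says precisely that, for each fixed $n \gqs 0$, the function $\Theta^{(n)}(z)$ is hyperbolic if and only if $J^{d,n}(X)$ is hyperbolic for all $d \gqs 1$.

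For the forward direction, assume RH. Because $\xi^{(2m)}(1/2)>0$ for every $m$ (as noted after \eqref{xi1}), any real zero of $\Theta(z)$ is automatically negative, so RH is equivalent to $\Theta(z)$ being hyperbolic. I would then apply Corollary \ref{cdo2} to conclude that $\Theta^{(n)}(z)$ is hyperbolic for every $n \gqs 0$, and Corollary \ref{cdo} to convert this into the hyperbolicity of $J^{d,n}(X)$ for all $d \gqs 1$, $n \gqs 0$. For the reverse direction, assume $J^{d,n}(X)$ is hyperbolic for all $d \gqs 1$, $n \gqs 0$; specializing to $n=0$ and applying Corollary \ref{cdo} with $n=0$ gives that $\Theta(z)$ is hyperbolic, hence RH.

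I do not expect a serious obstacle: once the two corollaries are in hand the proof is a two-line deduction. The only points requiring any care are checking the hypotheses under which Jensen's criterion applies — realness and genus at most $1$ for $\Theta$ and for each $\Theta^{(n)}$ — and the normalization bookkeeping matching the Jensen polynomials of $\Theta^{(n)}$ with the polynomials $J^{d,n}$ in \eqref{jensen}. The substantive content, the proofs of Corollaries \ref{cdo} and \ref{cdo2}, is classical, going back to Jensen and the theory of the Laguerre--P\'olya class, and is assumed here.
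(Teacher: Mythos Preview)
Your proposal is correct and matches the paper's own reasoning: the theorem is stated in the introduction as an immediate consequence of Corollaries \ref{cdo} and \ref{cdo2}, and you deduce both implications from them in exactly the way the paper intends. The only cosmetic difference is that the paper defers the structural facts about $\Theta^{(n)}$ being real of genus $1^*$ to the proofs of those corollaries rather than restating them at the point of Theorem \ref{rh}.
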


Griffin, Ono, Rolen and Zagier  revived interest in Theorem \ref{rh} when they showed in \cite{GORZ} that a great many of the Jensen polynomials $J^{d,n}(X)$ are hyperbolic. For every fixed $d$, \cite[Thm. 1]{GORZ} states that $J^{d,n}(X)$ is  hyperbolic for all sufficiently large $n$. This is made more explicit in \cite{Ono} where their Theorem 1.1 shows that $J^{d,n}(X)$ is  hyperbolic whenever $n \gg e^{8d/9}$.
In these papers, the results are demonstrated by using precise asymptotics for $\g(m)$ to show that  renormalized versions of $J^{d,n}(X)$ may be approximated by Hermite polynomials $H_d(X)$ as $n\to \infty$.

We  give a variant of Theorem \ref{rh} next by bringing in Hermite polynomials from the beginning. Set
\begin{equation} \label{jen-herm}
  P^{d,n}(X):=\sum_{j=0}^d \binom{d}{j}\g(n+j) H_{d-j}(X).
\end{equation}

\begin{theorem} \label{hyp}
The Riemann hypothesis is true if and only if  $P^{d,n}(X)$ is   hyperbolic for all $d \gqs 1$, $n \gqs 0$.
\end{theorem}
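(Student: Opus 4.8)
My plan is to tie $P^{d,n}$ to the ordinary Jensen polynomial $J^{d,n}$ of \e{jensen} through the (backward) heat semigroup and then prove the two implications separately. The starting point is the operational identity $H_k(X)=e^{-\frac14 D^2}(2X)^k$ with $D=d/dX$, which, applied term by term to \e{jen-herm}, gives
\begin{equation}\label{eq:PJ}
  P^{d,n}(X)=e^{-\frac14 D^2}\Bigl[\,\widetilde{J^{d,n}}(2X)\,\Bigr],\qquad
  \widetilde{J^{d,n}}(Y):=Y^dJ^{d,n}(1/Y)=\sum_{j=0}^d\binom dj\g(n+j)Y^{d-j}.
\end{equation}
Equivalently, summing \e{jen-herm} against $t^d/d!$ gives the generating identity $\sum_{d\gqs 0}P^{d,n}(X)t^d/d!=\Theta^{(n)}(t)e^{2Xt-t^2}$. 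So $P^{d,n}$ is the reciprocal polynomial of $J^{d,n}$, rescaled by $2$, acted on by one step $e^{-\frac14 D^2}$ of the heat flow.

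For the forward implication I would argue: assume the Riemann hypothesis, so by Theorem \ref{rh} every $J^{d,n}$ is hyperbolic. Since $\g(n)>0$ and $\g(n+d)>0$, each $J^{d,n}$ has degree exactly $d$ and does not vanish at the origin, so its reciprocal $\widetilde{J^{d,n}}$ — whose zeros are the reciprocals of those of $J^{d,n}$ — is hyperbolic, hence so is $\widetilde{J^{d,n}}(2X)$. It then remains to invoke the classical fact that $e^{-\frac14 D^2}$ preserves hyperbolicity: writing it as $\lim_{N\to\infty}\bigl(1-\tfrac{D^2}{4N}\bigr)^N$ coefficientwise and factoring $1-\tfrac{D^2}{4N}=\bigl(1-\tfrac{D}{2\sqrt N}\bigr)\bigl(1+\tfrac{D}{2\sqrt N}\bigr)$, each factor is a first-order operator $a-D$ which carries hyperbolic polynomials to hyperbolic ones (Hermite--Poulain, or Rolle and interlacing); the leading coefficient is never disturbed, so the degree stays $d$, and Hurwitz's theorem shows the limit $P^{d,n}$ is hyperbolic. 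Thus all $P^{d,n}$ are hyperbolic.

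For the converse it is enough to use $n=0$: since the Riemann hypothesis is equivalent to the hyperbolicity of $\Theta$, I must show that "$P^{d,0}$ hyperbolic for all $d$" forces $\Theta$ hyperbolic. The plan is a limiting argument in the spirit of Corollary \ref{cdo}: exhibit affine rescalings $c_d\,P^{d,0}(\alpha_dX+\beta_d)$ converging locally uniformly to an entire function whose zero set equals, up to an affine change of variable, that of $\Theta$; since each rescaled $P^{d,0}$ is hyperbolic, Hurwitz's theorem and the closedness of the Laguerre--P\'olya class then force $\Theta$ hyperbolic, hence the Riemann hypothesis. The tools for producing such a rescaling would be the classical $J^{d,0}(z/d)\to\Theta(z)$, the integral form $H_k(X)=\frac{2^k}{\sqrt\pi}\int_{\R}(X+it)^ke^{-t^2}\,dt$ (which turns \e{eq:PJ} into $P^{d,0}(X)=\frac1{\sqrt\pi}\int_{\R}\widetilde{J^{d,0}}(2(X+it))e^{-t^2}\,dt$), and the asymptotics of $\g(m)$ developed elsewhere in the paper.

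I expect this converse to be the real obstacle. One cannot simply invert \e{eq:PJ}, because the operator $e^{+\frac14 D^2}$ that would carry $P^{d,n}$ back to $\widetilde{J^{d,n}}(2X)$ does not preserve hyperbolicity (already $e^{\frac14 D^2}X^2=X^2+\tfrac12$ fails), and indeed at each fixed degree "$P^{d,n}$ hyperbolic" is strictly weaker than "$J^{d,n}$ hyperbolic": for $d=2$ it relaxes the Tur\'an inequality $\g(n+1)^2\gqs\g(n)\g(n+2)$ to $\g(n+1)^2\gqs\g(n)\g(n+2)-2\g(n)^2$. So the converse cannot be done degree by degree and genuinely needs the whole family together with an analytic argument; the delicate point is that the rescaling must reconcile the scale $\sqrt d$ natural to the Hermite basis with the scale on which $J^{d,0}(z/d)$ tends to $\Theta(z)$, and that is exactly where the precise asymptotics for $\g(m)$ become essential.
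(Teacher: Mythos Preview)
Your forward implication is correct and is essentially the paper's Corollary~\ref{poly-herm2}: the operator $e^{-D^2/4}$ (equivalently, replacing each monomial $z^k$ by $H_k(z)$) preserves hyperbolicity by Hermite--Poulain, so $J^{d,n}$ hyperbolic implies $P^{d,n}$ hyperbolic.

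The converse, however, is not proved. The limiting scheme you sketch---finding affine rescalings of $P^{d,0}$ that converge to something whose zeros determine those of $\Theta$---is left as a plan, and you yourself flag that reconciling the Hermite scale $\sqrt d$ with the Jensen scale $1/d$ using the asymptotics of $\g(m)$ is the delicate point. That route might eventually be made to work, but it is far heavier than what is needed, and the asymptotics of $\g(m)$ play no role in the actual proof.

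The gap is one of recognition, and you are a single observation away from closing it. Your own generating identity
\[
\sum_{d\gqs 0} P^{d,n}(X)\,\frac{t^d}{d!}=\Theta^{(n)}(t)\,e^{2Xt-t^2}
\]
is exactly the statement that $P^{d,n}(X)=g^*_d\bigl(\Theta^{(n)}\cdot e^{-z^2};\,2X\bigr)$, since in general $\sum_d g^*_d(F;x)\,t^d/d!=F(t)e^{xt}$; compare the paper's \e{tus}. So $P^{d,n}$ is not merely the heat flow of a Jensen polynomial---it \emph{is} the Jensen polynomial of the product $\Theta^{(n)}(z)e^{-z^2}$. Once you see this, the full P\'olya--Schur theorem (Theorem~\ref{psthm}) handles both directions simultaneously: all $P^{d,n}$ are hyperbolic iff $\Theta^{(n)}(z)e^{-z^2}$ lies in the Laguerre--P\'olya class, and since $e^{-z^2}$ is already in that class and has no zeros, this holds iff $\Theta^{(n)}$ is hyperbolic. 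That is precisely the paper's argument (Theorem~\ref{prod} with $\Omega=e^{-z^2}$), and it needs no limiting rescaling and no asymptotics at all.
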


In fact, we see in Sect. \ref{jkl} that Theorem \ref{hyp} is a special case of a more general result where the Hermite polynomials in  \e{jen-herm} may be replaced by any Jensen polynomial associated to an element of the Laguerre-P\'olya class.

Checking that the zeros of $P^{d,n}(X)$ are real seems easier than for $J^{d,n}(X)$. Combining the asymptotics of $\g(m)$ with a tailor-made theorem of Tur\'an (that was also employed in \cite{Ono})  gives the next result directly, showing that $P^{d,n}(X)$ has only real zeros for all but a relatively small number of shifts $n$.

\begin{theorem} \label{chem}
For all $d$ sufficiently large,  $P^{d,n}(X)$  is hyperbolic whenever $n/\log^2 n \gqs d^{3/4}/2$.
\end{theorem}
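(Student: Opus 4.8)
The plan is to present $P^{d,n}(X)/\g(n)$ as a small perturbation of the single Hermite polynomial $H_d(X)$ and to check, using the tailor-made theorem of Tur\'an (also employed in \cite{Ono}), that this perturbation cannot destroy hyperbolicity once $n/\log^2 n\gqs d^{3/4}/2$. Since $\g(n)>0$, definition \e{jen-herm} gives
\begin{equation*}
  \frac{P^{d,n}(X)}{\g(n)} \;=\; H_d(X) + \sum_{j=1}^d \binom{d}{j}\,\frac{\g(n+j)}{\g(n)}\, H_{d-j}(X),
\end{equation*}
which is already of the form ``$H_d$ plus lower-order Hermite terms'' with the coefficient of $H_d$ equal to $1$ and no $d!$-type inflation of the remaining coefficients. (Conceptually: replacing $\g(n+j)/\g(n)$ by the geometric approximation $t^j$, with $t:=\g(n+1)/\g(n)$, and using the Hermite addition formula $\sum_{j=0}^d\binom{d}{j}t^j H_{d-j}(X)=H_d(X+\alpha t)$ makes the leading part a shifted---hence hyperbolic---Hermite polynomial. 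This is the structural reason $P^{d,n}$ is easier to control than $J^{d,n}$.) It thus remains to bound the numbers $b_j:=\binom{d}{j}\,\g(n+j)/\g(n)$ for $1\lqs j\lqs d$ and to feed these into Tur\'an's theorem.

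First I would invoke the asymptotic expansion of $\g(m)$ established earlier in the paper. The two facts needed are: (i) $\g(n+1)/\g(n)\lqs c\,(\log n)^2/n$ for all sufficiently large $n$, where $c$ is the explicit leading constant read off from $\log\g(m)$; and (ii) $\g(n+j)/\g(n)\lqs\bigl(\g(n+1)/\g(n)\bigr)^{j}$ for $0\lqs j\lqs d$, which follows from the log-concavity of $\g$ (the Tur\'an inequalities for $\g$, or directly from the fact that $\log\bigl(\g(m+1)/\g(m)\bigr)=-\log m+2\log\log m+O(1)$ is eventually decreasing). With $\binom{d}{j}\lqs d^{j}/j!$ these give, for $1\lqs j\lqs d$,
\begin{equation*}
  |b_j| \;\lqs\; \frac{1}{j!}\left(\frac{c\,d\,(\log n)^2}{n}\right)^{j},
\end{equation*}
so the $b_j$ decay super-geometrically once $d(\log n)^2/n$ is small.

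Next I would apply Tur\'an's theorem in the form used in \cite{Ono}: it guarantees that $H_d(X)+\sum_{j\gqs1}b_jH_{d-j}(X)$ is hyperbolic provided a weighted sum $\sum_{j\gqs1}|b_j|\,\mu_j$ stays below an explicit threshold, the weights $\mu_j$ decreasing geometrically with ratio of order $d^{-1/4}$; it is the cancellation of this $d^{-1/4}$ against the factor $\binom{d}{1}=d$ that produces the exponent $3/4$. Combining the displayed bound with $\mu_j\asymp d^{-j/4}$ yields
\begin{equation*}
  \sum_{j\gqs1}|b_j|\,\mu_j \;\lqs\; \exp\!\left(\frac{\kappa\, d^{3/4}(\log n)^2}{n}\right)-1
\end{equation*}
for an explicit constant $\kappa$, so Tur\'an's condition holds as soon as $d^{3/4}(\log n)^2/n$ is below an absolute constant. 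Carrying the constants $c,\kappa$ and the Tur\'an threshold through, and absorbing all $1+o(1)$ factors into the hypothesis ``$d$ sufficiently large'', this is exactly the stated inequality $n/\log^2 n\gqs d^{3/4}/2$.

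The main obstacle is uniformity of the $\g(m)$-asymptotics over the whole window $n\lqs m\lqs n+d$: because $d$ may be as large as roughly $(n/\log^2 n)^{4/3}$, the interval $[n,n+d]$ is far longer than $n$, and one must control the remainder in the expansion of $\log\g(m)$, together with the constants in (i)--(ii), uniformly across it---this is precisely what the detailed expansions developed elsewhere in the paper provide. A secondary point is to confirm that the cited Tur\'an theorem supplies weights of geometric ratio of order $d^{-1/4}$ (equivalently, that its threshold has the order that makes $3/4$ the exponent appearing); granting that, the remaining estimates are routine.
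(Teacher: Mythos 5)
Your overall framework — isolate $H_d(X)$ as the leading Hermite term, bound the ratios $\g(n+j)/\g(n)$ via the asymptotics, and feed everything into Tur\'an's sufficient criterion — matches the paper's strategy. The gap is in what you claim Tur\'an's criterion actually says, and it is not a cosmetic gap. The theorem in question (\cite[Thm. III]{Tu59}, restated at the start of Sect. \ref{hyp-jen-poly}) is a \emph{weighted $\ell^2$} condition: for $G(z)=\sum_{j=0}^d c_j H_j(z)$ it requires
\begin{equation*}
\sum_{j=0}^{d-2}2^j j!\, c_j^2 < 2^d (d-1)!\, c_d^2,
\end{equation*}
in which, crucially, the coefficient $c_{d-1}$ of $H_{d-1}$ does not appear at all. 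In your normalization ($c_d=1$, $c_{d-j}=b_j$) this reads $\sum_{j=2}^d \frac{d}{2^j j!}\binom{d}{j}\,\g(n+j)^2/\g(n)^2<1$. There is no $\ell^1$ form $\sum_{j\gqs1}|b_j|\mu_j<\text{threshold}$ with weights of geometric ratio $d^{-1/4}$; that weight profile has no source in Tur\'an's theorem — the ratio of consecutive weights $2^jj!/(2^d(d-1)!)$ is $2(j+1)$, so, indexed from the top, it is of order $d^{-1}$, not $d^{-1/4}$. Moreover, an $\ell^1$ version including $j=1$ could not hold as stated: $|b_1|=d\,\g(n+1)/\g(n)$ is of size $d\log^2 n/n$, and under the hypothesis $n/\log^2 n\gqs d^{3/4}/2$ this is of order $d^{1/4}$, which diverges as $d\to\infty$. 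Tur\'an's criterion survives this exactly because the $H_{d-1}$-coefficient is absent from the sum.

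Consequently your derivation of the exponent $3/4$ (``cancellation of $d^{-1/4}$ against $\binom{d}{1}=d$'') is not the real mechanism, even though it lands on the right number. In the paper the $3/4$ comes from the dominant $j=2$ term of the genuine quadratic Tur\'an sum: with $\g(n+j)/\g(n)\ll(\log^2 n/n)^j$, that term is of order $d\binom{d}{2}(\log^2 n/n)^4\ll d^3(\log^2n/n)^4$, and demanding this to be at most a fixed constant yields $n/\log^2n\gg d^{3/4}$; the factor-of-$1/2$ constant is then checked numerically against the explicit bounds. Your secondary ingredients are sound and even offer a slight variant: using log-concavity (the Cs\'ordas--Norfolk--Varga Tur\'an inequality) to reduce $\g(n+j)/\g(n)$ to $(\g(n+1)/\g(n))^j$ is a legitimate alternative to the paper's Lemma \ref{nba}, and it in fact dissolves the uniformity-in-$j$ worry you raise at the end, since only $\g(n+1)/\g(n)$ at the single shift $n$ is then needed. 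But your concluding bound $\exp(\kappa d^{3/4}\log^2 n/n)-1$ is merely $O(1)$ under the hypothesis, not provably below an unspecified threshold, so the argument does not close. Replace the misremembered criterion with the actual Tur\'an inequality, square the ratios, and the $j=2$ term gives the exponent directly.
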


Comparing $J^{d,n}(X)$ and $P^{d,n}(X)$ for specific $d$, $n$, we will see in Corollary \ref{poly-herm2} that $J^{d,n}(X)$ being hyperbolic implies that  $P^{d,n}(X)$ is  hyperbolic. Hence the results of \cite{GORZ,Ono} also apply to $P^{d,n}(X)$. Chasse in \cite[Sect. 3]{Cha13}, proved that $J^{d,n}(X)$  is  hyperbolic\footnote{This range was extended in \cite[Cor. 1.3]{Ono} and may be further extended to $d \lqs 9\times 10^{24}$ using \cite{PT}.} for all $n\gqs 0$ and $d \lqs 2\times 10^{17}$ and so the same is true for  $P^{d,n}(X)$. As an example, it is easy to see that $J^{2,n}(X)$ is hyperbolic if and only if
\begin{equation} \label{tu}
  \g(n+1)^2 \gqs \g(n)\g(n+2).
\end{equation}
This is the  Tur\'an inequality, necessary for the Riemann hypothesis, and first proved for all $n \gqs 0$ in \cite{Cso86}. However, $P^{2,n}(X)$ is hyperbolic if and only if
\begin{equation} \label{tu2}
  \g(n+1)^2 + 2\g(n)^2 \gqs \g(n)\g(n+2),
\end{equation}
and clearly \e{tu} implies \e{tu2} but not the other way around. 

The key ingredient in the proof of Theorem \ref{chem} is the asymptotic expansion of the coefficients $\g(n)$ in \e{xi1} for large $n$. This is shown in \cite[Thm. 9, Eq. (14)]{GORZ} with an application of Laplace's method. We give a  more precise version of this result by including the usual error estimates and giving formulas for all the coefficients. We also confirm a suggestion of Romik in \cite[Sect. 6.1]{Rom} that the answer can be conveniently expressed in terms of $W(2n/\pi)$, with  $W$ the Lambert function.  Recall that this function is the inverse to $x \mapsto xe^x$ and so satisfies
\begin{equation} \label{lamb}
  W(xe^x)=x, \qquad W(x) e^{W(x)}=x
\end{equation}
for at least $x\gqs 0$. It is non-negative and increasing
for $x\gqs 0$ and  $W(x)\lqs \log x$ holds for $x\gqs e$.

\begin{theorem} \label{mainthm3}
 Set $w :=W(2n/\pi)$. Then as $n \to \infty$ we have
\begin{equation*}
 \g(n) = 4 \pi^{2} e^{7w/4}  \sqrt{\frac {w}{w+1}} \left(\frac{e w^2}{16 n e^{2/w}} \right)^n \left( 1+  \sum_{k=1}^{K-1}\frac{c_k(w)}{n^k}+  O\left( \frac{\log^K(n)}{n^K}\right) \right)
\end{equation*}
for an implied constant depending only on $K$. Each $c_k(w)$ is a rational function of $w$ with size $O(\log^k(n))$ and given explicitly in \e{stx2}.
\end{theorem}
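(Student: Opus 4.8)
The plan is to extract everything from the classical integral representation in \e{mom2}, which we write as
\[
  \g(n)=\frac{n!}{(2n)!}\int_0^\infty\Phi(u)\,u^{2n}\,du,\qquad
  \Phi(u)=\sum_{k\gqs1}\bigl(8\pi^2k^4e^{9u/2}-12\pi k^2e^{5u/2}\bigr)e^{-\pi k^2e^{2u}}>0.
\]
For large $n$ the factor $u^{2n}$ is enormous while $\Phi$ decays doubly exponentially, so $\Phi(u)u^{2n}$ is sharply peaked; setting the derivative of $2n\log u+\log\Phi(u)$ to zero shows the peak sits where $2\pi e^{2u}\approx 2n/u$, i.e.\ where $(2u)e^{2u}\approx 2n/\pi$, hence at $u\approx\tfrac12 W(2n/\pi)$. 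This is Romik's observation and it forces the shape of the answer; write $w:=W(2n/\pi)$ throughout, so that $we^w=2n/\pi$ and $e^{w/2}=\sqrt{2n/(\pi w)}$.

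First I would discard the terms with $k\gqs2$ in $\Phi$: throughout the peak region $u\asymp\tfrac12\log n$ they carry an extra factor $e^{-\pi(k^2-1)e^{2u}}\lqs e^{-3\pi e^{2u}}\lqs e^{-cn/\log n}$, while away from the peak the usual Laplace tail bounds apply, so their contribution lies inside the error term. Keeping $k=1$, substitute $u=\tfrac w2+v$ and use $we^w=2n/\pi$ to rewrite the exponent $2n\log u+\tfrac92u-\pi e^{2u}$ as $2n\log\tfrac w2+\tfrac{9w}{4}-\tfrac{2n}{w}+\tfrac92 v-2n\rho(v)$, where $\rho(v):=\frac{e^{2v}}{w}-\log\!\bigl(1+\tfrac{2v}{w}\bigr)-\frac1w$ satisfies $\rho(0)=\rho'(0)=0$ and $\rho''(0)=\frac{4(w+1)}{w^2}>0$ (and $\rho$ has no other critical point on $(-w/2,\infty)$ and blows up at both ends). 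Pulling out the factors independent of $v$ gives
\[
  \g(n)=\frac{n!}{(2n)!}\cdot 8\pi^2e^{9w/4}\Bigl(\tfrac w2\Bigr)^{2n}e^{-2n/w}\int_{-w/2}^\infty a_n(v)\,e^{-2n\rho(v)}\,dv\;+\;O\!\bigl(e^{-cn/\log n}\bigr),
\]
with amplitude $a_n(v)=\bigl(1-\tfrac{3w}{4n}e^{-2v}\bigr)e^{9v/2}$, an entire function of $v$ whose Taylor coefficients at $0$ are rational in $w$ (the $O(w/n)$ part coming from the $e^{5u/2}$ piece of $\Phi$).

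Next I would run Laplace's method on the last integral. Because $\rho''(0)=\tfrac{4(w+1)}{w^2}$ degenerates like $1/w$, the effective window is $|v|\lesssim\sqrt{w/n}$ rather than $n^{-1/2}$; rescaling $v$ accordingly and using that every $\rho^{(k)}(0)=2^k/w+O(1/w^2)$ is $O(1/w)$ for $k\gqs2$, term-by-term Gaussian integration produces an asymptotic expansion $\tfrac w2\sqrt{\pi/(n(w+1))}\,\sum_{j\gqs0}b_j(w)(2n)^{-j}$ with $b_0=a_n(0)=1-\tfrac{3w}{4n}$, each $b_j$ rational in $w$ of size $O(w^j)$ (one factor of $1/\rho''(0)=O(w)$ per order), and remainder after $K$ terms of order $O((w/n)^K)$; the tails (at $v\to-w/2$, where $u^{2n}\to0$, and at $v\to\infty$, where $e^{-2n\rho(v)}$ decays super-exponentially) contribute $O(e^{-cn/\log n})$. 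Then I would combine this with the full Stirling series $\frac{n!}{(2n)!}=\frac{1}{\sqrt2}\bigl(\tfrac{e}{4n}\bigr)^n\bigl(1+\sum_k d_kn^{-k}\bigr)$, $d_k\in\Q$. Using $\bigl(\tfrac{e}{4n}\bigr)^n\bigl(\tfrac w2\bigr)^{2n}e^{-2n/w}=\bigl(\tfrac{ew^2}{16ne^{2/w}}\bigr)^n$, the product of the two $v$-independent prefactors, $\tfrac{1}{\sqrt2}\cdot8\pi^2\cdot\tfrac w2\sqrt{\pi/(n(w+1))}\cdot e^{9w/4}$, collapses to exactly $4\pi^2e^{7w/4}\sqrt{w/(w+1)}$ once one substitutes $e^{w/2}=\sqrt{2n/(\pi w)}$; and the product of the two power series in $1/n$ is $1+\sum_{k=1}^{K-1}c_k(w)n^{-k}+O(\log^K n/n^K)$ with $c_k$ rational in $w$ and $c_k=O(w^k)=O(\log^k n)$ since $w=W(2n/\pi)\lqs\log n$. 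Reading off the $c_k$ from the $b_j$ and $d_k$ then gives \e{stx2}.

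The hard part is making the Laplace step genuinely uniform: since the Hessian shrinks, one must track exactly how many powers of $w$ enter each $b_j$ and the remainder and verify they enter at precisely the rate turning $b_j/(2n)^j$ into $O((\log n/n)^j)$ and the truncated error into $O(\log^K n/n^K)$ with implied constant depending only on $K$. Once that bookkeeping is set up, evaluating the $b_j$ (hence the $c_k$) is a finite computation with the first several Taylor coefficients of $\rho$ and $a_n$, and the tail estimates together with the $k\gqs2$ theta terms are routine, though they need care precisely because the peak moves with $n$.
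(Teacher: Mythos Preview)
Your proposal is correct and follows essentially the same route as the paper: isolate the $k=1$ term of $\Phi$, apply Laplace's method at the peak $u_0=\tfrac12 W(2n/\pi)$, and combine the resulting series with Stirling's expansion of $n!/(2n)!$. The only differences are organizational: the paper first changes variables to $t=e^{2u}$ and proves a general asymptotic (Theorem~\ref{ian2}) for $\int_1^\infty(\log t)^n e^{-\alpha t}f(t)\,dt$ with the multiplicative shift $t=t_0(1+x)$, then deduces Theorem~\ref{mainthm2} and finally Theorem~\ref{mainthm3}, whereas you work directly in the $u$ variable with the additive shift $u=\tfrac w2+v$; the two local coordinates are related by $v=\tfrac12\log(1+x)$ and lead to the same expansion.
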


The first coefficient is
\begin{equation} \label{c1w}
  c_1(w) = -\frac{w^4+58 w^3+29 w^2-24 w-16}{192 (w+1)^3}.
\end{equation}
Table \ref{xeb} compares the approximations of Theorem \ref{mainthm3} with the actual value of $\g(n)$ for $n=1000$. All decimals are correct to the accuracy shown. See also  Table 2 in \cite{GORZ}, (their $\g(n)$ is $8$ times larger).

\begin{table}[ht]
\centering
\begin{tabular}{ccc}
\hline
 $K$   & Theorem \ref{mainthm3} & \\
\hline
 $1$    & $4.84\textcolor{gray}{60204243211378239} \times 10^{-2568}$ & \\
 $3$    & $4.845042611\textcolor{gray}{1532216799} \times 10^{-2568}$ & \\
 $5$    & $4.84504261127258\textcolor{gray}{84216} \times 10^{-2568}$ & \\
 $7$    & $4.8450426112725879772\textcolor{gray}{} \times 10^{-2568}$  &  \\
\hline
\phantom{$\g(1000)$} & $4.8450426112725879772 \times 10^{-2568}$ & $\g(1000)$\\
\hline
\end{tabular}
\caption{The approximations of Theorem \ref{mainthm3} to $\g(1000)$.} \label{xeb}
\end{table}

Theorem \ref{mainthm3} follows from the next theorem, giving the asymptotics of $\xi^{(2n)}(1/2)$ from \e{xip}.

\begin{theorem} \label{mainthm2}
 Set $w :=W(2n/\pi)$. Then as $n\to \infty$ we have
\begin{equation} \label{ytr}
  \xi^{(2n)}(1/2)= 4\pi^{2}e^{7w/4}\sqrt{\frac{2 w}{w+1}} \left( \frac{w}{2e^{1/w}}\right)^{2n}
 \left( 1+  \sum_{k=1}^{K-1}\frac{\mu_k(w)}{n^k}+  O\left( \frac{\log^{K}(n)}{n^K}\right) \right)
\end{equation}
for an implied constant depending only on $K$. Each $\mu_k(w)$ is a rational function of $w$ with size $O(\log^k(n))$ and given explicitly by \e{same2} and \e{pq3x}.
\end{theorem}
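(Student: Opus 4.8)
The plan is to evaluate the moment integral behind $\xi^{(2n)}(1/2)$ by Laplace's method, as in \cite[Thm. 9]{GORZ}, but pushing the expansion to all orders and keeping explicit error bounds. Recall from \e{mom2} that $\xi^{(2n)}(1/2)=\int_0^\infty u^{2n}\Phi(u)\,du$ for a positive function $\Phi$; writing out the relevant theta series gives $\Phi(u)=e^{-\pi e^{2u}}\bigl(c_0e^{9u/2}+c_1e^{5u/2}\bigr)+R(u)$, where $c_0>0$ and $c_1$ are explicit constants and $R(u)$ carries the extra decay $e^{-3\pi e^{2u}}$ (so its contribution to the integral is smaller than the main term by more than any power of $n$). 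Thus the task reduces to the asymptotics of $\int_0^\infty A(u)e^{F(u)}\,du$ with amplitude $A(u):=c_0e^{9u/2}+c_1e^{5u/2}$ and exponent $F(u):=2n\log u-\pi e^{2u}$.

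First I would locate the saddle. Since $F''(u)=-2n/u^2-4\pi e^{2u}<0$, the exponent $F$ is strictly concave on $(0,\infty)$ with a unique maximum at the point $u_0$ solving $F'(u_0)=0$, i.e.\ $n=\pi u_0e^{2u_0}$; putting $y=2u_0$ this reads $2n/\pi=ye^y$, so $u_0=\tfrac12W(2n/\pi)=w/2$, which is Romik's observation in \cite[Sect. 6.1]{Rom}. The defining relation $we^w=2n/\pi$, i.e.\ $\pi e^{2u_0}=2n/w$, then collapses all the data at the saddle to closed form: one finds $e^{F(u_0)}=(w/2)^{2n}e^{-2n/w}=\bigl(w/(2e^{1/w})\bigr)^{2n}$, which is exactly the exponential factor in \e{ytr}; $F''(u_0)=-8n(w+1)/w^2$, which yields the $\sqrt{2w/(w+1)}$; and each higher derivative $F^{(j)}(u_0)$ together with each $A^{(j)}(u_0)/A(u_0)$ becomes an explicit rational function of $w$ and $n$ once $e^{2u_0}$ is eliminated.

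Next I would carry out the Laplace expansion. Split the integral at $|u-u_0|=\eta$ with $\eta\asymp w\,n^{-1/2}\log n$; strict concavity of $F$ and crude bounds on $A$ make the outer part $O(n^{-C})$ for every fixed $C$. On the inner interval substitute $u=u_0+s$, Taylor-expand $F$ to order $2K+2$ and $A$ to order $2K$ about $u_0$, exponentiate the polynomial corrections, extend the integration back to $\R$ at cost $O(n^{-C})$, and integrate the resulting Gaussian-times-polynomial term by term. This produces
\begin{equation*}
\int_0^\infty A(u)e^{F(u)}\,du=A(u_0)\sqrt{\frac{2\pi}{-F''(u_0)}}\,e^{F(u_0)}\Bigl(1+\sum_{k=1}^{K-1}\frac{b_k}{n^k}+O\Bigl(\frac{\log^K(n)}{n^K}\Bigr)\Bigr),
\end{equation*}
where each $b_k$ is a fixed universal polynomial combination of the derivatives of $F$ and $A$ at $u_0$ which, after $e^{2u_0}$ is eliminated, is a rational function of $w$ of net degree at most $k$, so $b_k=O(\log^k(n))$. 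It then remains to simplify the prefactor: $A(u_0)=c_0e^{9w/4}+c_1e^{5w/4}$ and $\sqrt{2\pi/(-F''(u_0))}=\tfrac{w}{2}\sqrt{\pi/(n(w+1))}$, and eliminating $n$ through $\sqrt n=\sqrt{\pi/2}\,\sqrt w\,e^{w/2}$ turns $A(u_0)\sqrt{2\pi/(-F''(u_0))}$ into a constant times $e^{7w/4}\sqrt{w/(w+1)}$ multiplied by $1+O(\log n/n)$ (the $c_1$-term being exactly such a correction); restoring the overall constant from \e{mom2} gives the prefactor of \e{ytr}, and the explicit coefficients $\mu_k(w)$ of \e{same2} and \e{pq3x} are read off by merging the $b_k$ with this residual factor.

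The main obstacle is making the Laplace expansion genuinely uniform in $n$. Because the saddle $u_0=\tfrac12W(2n/\pi)$ drifts off to $+\infty$ at the rate $\tfrac12\log n$ and all the derivatives $F^{(j)}(u_0)$ are of the same large size $\asymp n/w$ rather than $O(n)$ with bounded higher data, one cannot simply invoke a standard Laplace lemma: one must track the $w$-dependence through the Taylor remainders and the termwise integration and verify that the $k$-th correction is genuinely $O(\log^k(n)/n^k)$, not merely $O(w^{2k}/n^k)$ with an uncontrolled power of $w$. That uniformity is what fixes the shape of the error term in \e{ytr}, and confirming that the $\mu_k(w)$ are rational in $w$ of the stated size is the substantive part of the argument; by contrast, discarding $R(u)$ and the outer tail costs only an exponentially small amount.
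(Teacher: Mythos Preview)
Your proposal is correct and is essentially the same Laplace-method argument as the paper's, carried out in the $u$-variable of \e{mom} rather than the $t$-variable of \e{mom2}; the two are related by $t=e^{2u}$, and your saddle $u_0=w/2$ corresponds exactly to the paper's $t_0=e^{w}$, with your $F''(u_0)$ matching the paper's quantity $C=n(1+u)/u^2$ after the change of variables. The only organizational difference is that the paper packages the method into a general result (Theorem~\ref{ian2}) for $\int_1^\infty(\log t)^n e^{-\alpha t}f(t)\,dt$, applies it separately to $f(t)=t^{5/4}$ and $f(t)=t^{1/4}$, and then combines via the identity $e^{w}=2n/(\pi w)$ --- exactly your absorption of the $c_1e^{5w/4}$ term as a $O(w/n)$ correction --- whereas you treat the full amplitude $A(u)$ at once; this modular route is what produces the specific formulas \e{same2} and \e{pq3x}, so your direct approach would yield equivalent rational functions whose identification with those formulas rests on uniqueness of the expansion.
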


Theorem \ref{mainthm2} is essentially a reformulated  version of \cite[Thm. 9]{GORZ} where they used the solution $L$ to the equation $n=L(\pi e^L+3/4)$ instead of $W(2n/\pi)$. The main term of the expansion \e{ytr} appears in Thm. 6.1 of \cite{Rom} and may also be compared with the weaker results of \cite{Pu} and \cite{Co}.

The asymptotics of many related integrals are covered by our techniques in Sect. \ref{lap} and a general result is formulated  in Theorem \ref{ian2}. This  gives the complete asymptotic expansion as $n \to \infty$ of
\begin{equation} \label{iafn}
  I_\alpha(f;n):=\int_1^\infty  (\log t)^n e^{-\alpha t}  f(t) \, dt \qquad \qquad (\alpha >0)
\end{equation}
for suitable functions $f$. For an application of this, also involving  Hermite polynomials, recall Tur\'an's expansion
\begin{equation} \label{turanbn}
  \Xi(z)=\sum_{n=0}^\infty (-1)^n b_{2n} H_{2n}(z)
\end{equation}
for $\Xi(z)= \xi(1/2+i z)$. This series converges locally uniformly  in $\C$ according to \cite[Thm. 2.1]{Rom}. We may extend the asymptotics for $b_{2n}$ in \cite[Thm. 2.7]{Rom} with the next result.

\begin{theorem} \label{turanb}
 Set $w :=W(2n/\pi)$. Then as $n \to \infty$ we have
\begin{equation*}
  b_{2n}=4\pi^{2} \frac{ e^{7w/4-w^2/16}}{(2n)!}\sqrt{\frac{ 2w}{w+1}} \left( \frac{w}{4e^{1/w}}\right)^{2n}
 \left( 1+  \sum_{k=1}^{K-1}\frac{\tau_k(w)}{n^k}+  O\left( \frac{\log^{3K}(n)}{n^K}\right) \right)
\end{equation*}
for an implied constant depending only on $K$. Each $\tau_k(w)$ is a rational function of $w$ with size $O(\log^{3k}(n))$ and given by the formulas \e{same}, \e{pq2} and \e{pq3}.
\end{theorem}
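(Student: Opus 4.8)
The plan is to reduce the statement to the Laplace-method machinery of Section~\ref{lap}, following the same pattern used to prove Theorem~\ref{mainthm2}. First I would establish an integral representation for $b_{2n}$. Combining Tur\'an's expansion \e{turanbn} (valid by \cite[Thm.~2.1]{Rom}), the even Taylor development \e{xip}, so that $\Xi(z)=\sum_{m\gqs 0}\frac{(-1)^m\xi^{(2m)}(1/2)}{(2m)!}\,z^{2m}$, and the classical identity
\begin{equation*}
 z^{2m}=\frac{(2m)!}{4^m}\sum_{\ell=0}^{m}\frac{H_{2\ell}(z)}{(m-\ell)!\,(2\ell)!}
\end{equation*}
expressing monomials in the Hermite basis, one compares the coefficient of $H_{2n}(z)$ on both sides to get
\begin{equation*}
 b_{2n}=\frac{1}{4^n\,(2n)!}\sum_{j\gqs 0}\frac{(-1)^j}{4^j\,j!}\,\xi^{(2n+2j)}(1/2).
\end{equation*}
Now insert \e{mom2}, which writes $\xi^{(2m)}(1/2)$ as the integral of a positive, rapidly decaying density, interchange sum and integral (justified by dominated convergence, using the double-exponential decay of that density), and sum the inner series by means of $\sum_{j\gqs 0}(-1)^j x^{2j}/(4^j j!)=e^{-x^2/4}$. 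After a change of variables this becomes
\begin{equation*}
 b_{2n}=\frac{1}{16^n\,(2n)!}\sum_{k\gqs 1}\int_1^\infty (\log t)^{2n}\,e^{-\pi k^2 t}\,f_k(t)\,dt ,\qquad f_k(t)=q_k(t)\,e^{-(\log t)^2/16},
\end{equation*}
where the $q_k$ are precisely the fixed polynomials in $t^{1/4}$ occurring, without the Gaussian factor, in the representation of $\xi^{(2m)}(1/2)$ that underlies Theorem~\ref{mainthm2}. (This representation is essentially \cite[Thm.~2.7]{Rom}.)

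Next I would apply Theorem~\ref{ian2} to each $I_{\pi k^2}(f_k;2n)$. As in the proof of Theorem~\ref{mainthm2}, the $k=1$ term dominates: its phase $2n\log\log t-\pi t$ has its saddle at $t_0$ with $t_0\log t_0=2n/\pi$, hence $\log t_0=w=W(2n/\pi)$ by \e{lamb}, while for $k\gqs 2$ the corresponding saddle lies at $\log t=W(2n/(\pi k^2))$ and the contribution is exponentially smaller than the $k=1$ term, so it is absorbed into the error exactly as in Theorem~\ref{mainthm2}. The only new ingredient compared with Theorem~\ref{mainthm2} is the weight $e^{-(\log t)^2/16}$ in $f_1$, which at the saddle equals $e^{-w^2/16}$; together with the leading term of Theorem~\ref{mainthm2} for $\xi^{(2n)}(1/2)$ and the identity $\frac{1}{4^n}\bigl(\frac{w}{2e^{1/w}}\bigr)^{2n}=\bigl(\frac{w}{4e^{1/w}}\bigr)^{2n}$ this yields exactly the stated main term $\frac{4\pi^2 e^{7w/4-w^2/16}}{(2n)!}\sqrt{\frac{2w}{w+1}}\bigl(\frac{w}{4e^{1/w}}\bigr)^{2n}$.

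For the coefficients $\tau_k(w)$ one carries out the full Laplace expansion of the $k=1$ integral about $t_0=e^w$ with amplitude $f_1(t)=q_1(t)e^{-(\log t)^2/16}$. This is where the extra logarithms arise: $t_0=e^w$ has order $n/\log n$, the effective width of the saddle has order $\sqrt{n/\log n}$, and $w=O(\log n)$, so the successive terms in the Taylor expansion of $(\log t)^2$ about $t_0$ decrease only by a factor of order $\log^{3/2}(n)/\sqrt n$. Consequently each order in $1/n$ picks up as many as three further powers of $\log n$ (equivalently of $w$); this is exactly why each $\tau_k(w)$ is a rational function of $w$ of size $O(\log^{3k}(n))$ and the remainder is $O(\log^{3K}(n)/n^K)$ rather than $O(\log^{K}(n)/n^K)$. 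Assembling the standard Laplace correction series of Theorem~\ref{ian2}, the polynomial $q_1$, and the contributions of the weight, and simplifying with \e{lamb}, produces the explicit formulas \e{same}, \e{pq2}, \e{pq3}.

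The hard part is this last step: making the Laplace expansion uniform in the presence of the weight $e^{-(\log t)^2/16}$, whose derivatives at the saddle grow with $n$, and confirming that the remainder after $K$ terms is genuinely of size $\log^{3K}(n)/n^K$. This estimate is precisely what Theorem~\ref{ian2} is built to supply, so once it is in hand the rest of the proof is routine bookkeeping, with the negligibility of the $k\gqs 2$ tail and the interchange of $\sum_k$ with the asymptotics handled exactly as for Theorem~\ref{mainthm2}.
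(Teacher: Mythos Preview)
Your proposal is correct and follows essentially the same route as the paper. The only difference is cosmetic: the paper quotes the integral representation
\[
b_{2n}=\frac{1}{2^{4n-1}(2n)!}\int_1^\infty (\log t)^{2n}\,\omega(t)\,t^{-3/4}e^{-(\log t)^2/16}\,dt
\]
directly from \cite[Eq.~(2.1), Thm.~2.1]{Rom}, whereas you re-derive it from the monomial-to-Hermite identity and \e{mom2}; after that both arguments expand $\omega(t)$, apply Theorem~\ref{ian2} with the suitable weight $f(t)=t^\beta e^{-(\log t)^2/16}$ (so $\lambda=1$, which is exactly what produces the $u^{3r}$ bound on $a_r(f;u)$ and hence the $\log^{3K}(n)/n^K$ error), and dispose of the $k\gqs 2$ terms by the comparison carried out in Lemma~\ref{www}. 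Your heuristic for the extra logarithms is the informal version of the $\lambda=1$ case of Theorem~\ref{ian2}.
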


We remark that Hermite polynomials have also appeared recently in connection with the asymptotics of $\zeta(s)$ in \cite{OSrs}, which gives a generalization of the Riemann-Siegel formula.
Romik explores    further orthogonal polynomial expansions of $\Xi[z]$ in  \cite{Rom}.
Wagner, in \cite{Wa}, extends the work in  \cite{GORZ} to large classes of $L$-functions.
 The techniques in this paper should also be useful for these generalizations.

\vskip 4mm
{\bf Acknowledgements.} Thanks to Jacques G\'elinas, Dan Romik, Tim Trudgian and both referees for their helpful comments.

\section{Preliminaries}
The {\em Hermite polynomials} have generating function $\exp(2Xt-t^2)$ and the explicit expression
\begin{equation*}
  H_d(X)= d! \sum_{r=0}^{\lfloor d/2\rfloor} \frac{ (-1)^r }{r! (d-2r)!}  (2X)^{d-2r}.
\end{equation*}
These polynomials are a special case of the Laguerre polynomials appearing in \e{lague}.

The {\em partial ordinary Bell polynomials} $\hat{\mathcal B}_{i,j}$ are  useful devices to keep track of power series  coefficients. With $j \in \Z_{\gqs 0}$, we have the generating function definition
\begin{equation} \label{pobell2}
    \left( p_1 x +p_2 x^2+ p_3 x^3+ \cdots \right)^j = \sum_{i=j}^\infty \hat{\mathcal B}_{i,j}(p_1, p_2, p_3, \dots) x^i.
\end{equation}
Clearly $\hat{\mathcal B}_{i,0}(p_1, p_2, p_3,  \dots) = \delta_{i,0}$.
The formulas
\begin{align} \label{pobell}
    \hat{\mathcal B}_{i,j}(p_1, p_2, p_3, \dots) & = \sum_{\substack{1\ell_1+2 \ell_2+ 3\ell_3+\dots = i \\ \ell_1+ \ell_2+ \ell_3+\dots = j}}
    \frac{j!}{\ell_1! \ell_2! \ell_3! \cdots } p_1^{\ell_1} p_2^{\ell_2} p_3^{\ell_3} \cdots, \\
& = \sum_{n_1+n_2+\dots + n_j = i}
    p_{n_1}p_{n_2} \cdots p_{n_j} \qquad \qquad (j \gqs 1) \label{pobell3}
\end{align}
hold, where the sum  in \e{pobell} is over all possible $\ell_1$, $\ell_2$, $\ell_3, \dots \in \Z_{\gqs 0}$ and the sum in \e{pobell3} is over all possible $n_1$, $n_2,  \dots \in \Z_{\gqs 1}$.
 For instance,
\begin{equation*}
  \hat{\mathcal B}_{9,6}(p_1, p_2, p_3, \dots) = 20 p_1^3 p_2^3+30 p_1^4 p_2 p_3 + 6 p_1^5 p_4.
\end{equation*}
For $j \gqs 1$ we see from \e{pobell3} that $\hat{\mathcal B}_{i,j}(p_1, p_2, p_3, \dots)$ is a polynomial in $p_1, p_2, \dots, p_{i-j+1}$ of homogeneous degree $j$ with positive integer coefficients.
See the discussion and references in \cite[Sect. 7]{OSper}, for example, for more information.

As an application we will need later, consider
\begin{equation} \label{hot}
  h_u(x) := \log\left( 1+\frac{\log(x+1)}{u}\right) = \sum_{i=1}^\infty \ell_i(u) x^i
\end{equation}
which is a holomorphic function of $x\in \C$ for  $|x|\lqs 1/2$ and $u\gqs 2$, say.
To find the coefficients $\ell_i(u)$ write
\begin{align}
   \log\left( 1+\frac{\log(x+1)}{u}\right) & = \sum_{j=1}^\infty \frac{(-1)^{j+1}}{j \cdot u^j} \log^j(x+1) \notag\\
  & = \sum_{j=1}^\infty \frac{(-1)^{j+1}}{j \cdot u^j} \sum_{i=j}^\infty \hat{B}_{i,j}( 1,-{\textstyle \frac 12},{\textstyle \frac 13},\dots) x^i\notag\\
& = \sum_{i=1}^\infty x^i  \sum_{j=1}^i  \hat{B}_{i,j}( 1,-{\textstyle \frac 12},{\textstyle \frac 13},\dots)\frac{(-1)^{j+1}}{j \cdot u^j}. \label{wash}
\end{align}
Then
\begin{equation}
  \ell_i(u)  = \sum_{j=1}^i \hat{B}_{i,j}( 1,-{\textstyle \frac 12},{\textstyle \frac 13},\dots) \frac{(-1)^{j+1}}{j \cdot u^j} \label{covi}
\end{equation}
with
\begin{equation}
 \ell_1(u)=\frac 1u, \qquad \ell_2(u)=-\frac 1{2u}-\frac 1{2u^2}, \qquad \ell_3(u)=\frac 1{3u}+\frac 1{2u^2}+\frac 1{3u^3},  \qquad \text{etc}.\label{luex}
\end{equation}

We also record here a basic bound for the incomplete gamma function $\G(s,a):=\int_a^\infty e^{-x} x^{s-1}\, dx$.

\begin{lemma} \label{inc-gam}
For $a,r \gqs 0$ we have
\begin{equation*}
  \G(r+1,a) \lqs 2^r(a^r + \G(r+1))e^{-a}.
\end{equation*}
\end{lemma}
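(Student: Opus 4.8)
The natural move is to shift the variable of integration so that the incomplete integral becomes a full integral over $[0,\infty)$. Writing $x = a+t$ in the definition $\G(r+1,a) = \int_a^\infty e^{-x}x^r\,dx$ gives
\begin{equation*}
  \G(r+1,a) = \int_0^\infty e^{-(a+t)}(a+t)^r\,dt = e^{-a}\int_0^\infty e^{-t}(a+t)^r\,dt .
\end{equation*}
So it suffices to bound $\int_0^\infty e^{-t}(a+t)^r\,dt$ by $2^r(a^r+\G(r+1))$.

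The key elementary inequality is $(a+t)^r \lqs 2^r(a^r+t^r)$ for all $a,t\gqs 0$. This I would prove by the crude but robust estimate $a+t \lqs 2\max(a,t)$, hence $(a+t)^r \lqs 2^r\max(a,t)^r \lqs 2^r(a^r+t^r)$; alternatively one can split into the cases $0\lqs r\lqs 1$ (subadditivity of $x\mapsto x^r$) and $r\gqs 1$ (convexity, giving even the sharper constant $2^{r-1}$). Substituting this bound and splitting the integral yields
\begin{equation*}
  \int_0^\infty e^{-t}(a+t)^r\,dt \lqs 2^r\left( a^r\int_0^\infty e^{-t}\,dt + \int_0^\infty e^{-t}t^r\,dt \right) = 2^r\bigl(a^r + \G(r+1)\bigr),
\end{equation*}
and combining with the displayed identity for $\G(r+1,a)$ gives the claim.

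There is essentially no obstacle here; the only point requiring a (very short) argument is the pointwise inequality $(a+t)^r\lqs 2^r(a^r+t^r)$, and even that is standard. The proof is a few lines and the constant $2^r$ is deliberately non-optimal, chosen to make the split of $(a+t)^r$ trivial.
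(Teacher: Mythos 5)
Your proof is correct and uses essentially the same idea as the paper: the substitution $x=a+t$ to pull out $e^{-a}$, followed by the $\max$-bound $a+t\lqs 2\max(a,t)$ to get the factor $2^r$. The paper organizes this slightly differently — splitting $\int_0^\infty e^{-x}(x+a)^r\,dx$ into $\int_0^a+\int_a^\infty$ and bounding $(x+a)^r$ by $(2a)^r$ or $(2x)^r$ on the respective pieces before extending both ranges to $[0,\infty)$ — but this is the same argument with the case split applied to the domain rather than packaged as the pointwise inequality $(a+t)^r\lqs 2^r(a^r+t^r)$.
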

\begin{proof}
The result follows from
\begin{align*}
  \int_a^\infty e^{-x} x^{r}\, dx & = e^{-a}\left(\int_0^a e^{-x}(x+a)^r \, dx +  \int_a^\infty e^{-x}(x+a)^r \, dx\right) \\
   & \lqs e^{-a}\left(\int_0^\infty e^{-x}(2a)^r \, dx +  \int_0^\infty e^{-x}(2x)^r \, dx\right). \qedhere
\end{align*}
\end{proof}
Hence, for all $a,r \gqs 0$ and $c>0$
\begin{equation}\label{gmmb}
  \int_a^\infty e^{-c x} x^{r}\, dx \ll c^{-r-1} \left((ac)^r +1\right) e^{-ac}
\end{equation}
for an implied constant depending only on $r$.

\section{Jensen polynomials and the Laguerre-P\'olya class} \label{jkl}
\subsection{Background}
In \cite{Po27} P\'olya defines a function  to be of {\em genus $1^*$} (``erh\"{o}htem
Genus $1$'') if it is of the form $e^{-\alpha z^2}f(z)$ for $\alpha \gqs 0$ and $f(z)$  an entire function of genus at most $1$. 
So this class includes some entire functions of order $2$ and, by Hadamard's theorem, all entire functions of order $<2$.
It is easy to see that the product of two functions of genus $1^*$ is also of genus $1^*$. See for example \cite[Sect. 1]{kk} for a discussion of P\'olya's long study of genus $1^*$ functions.

The {\em Laguerre-P\'olya class} consists of functions that are real, entire of  genus $1^*$ and that satisfy the added condition of having only real zeros. In particular, the functions $\Xi(z)$ and $\Theta(z)$ are both real entire functions of  genus $1^*$ and they are in the Laguerre-P\'olya class if and only if the Riemann hypothesis is true.
P\'olya and Schur in \cite{PS14} characterize the  Laguerre-P\'olya class in various ways and we give their description in terms of Jensen polynomials next.
For a formal power series $\Phi(z)=\sum_{j=0}^\infty c_j z^j/j!$  define
\begin{equation*}
  g_d(\Phi;x):= \sum_{j=0}^d \binom{d}{j} c_j x^j.
\end{equation*}
to be the {\em Jensen polynomial of degree $d$ associated to $\Phi$}. Let
\begin{equation*}
  g^*_d(\Phi;x):= x^d g_d(\Phi;1/x) = \sum_{j=0}^d \binom{d}{j} c_j x^{d-j}
\end{equation*}
be the reciprocal polynomial.
We have easily
\begin{equation} \label{appe}
  \frac d{dx} g_d(\Phi;x) = d \cdot g_{d-1}(\Phi';x), \qquad \frac d{dx} g^*_d(\Phi;x) = d \cdot g^*_{d-1}(\Phi;x)
\end{equation}
and the right identity in \e{appe} indicates that $g^*_d(\Phi;x)$ is an {\em Appell polynomial}. More properties of Jensen polynomials are given in \cite[Prop. 2.1]{Cso90}.

\begin{theorem}[P\'olya-Schur  \cite{PS14}] \label{psthm}
Let $\Phi(z)=\sum_{j=0}^\infty c_j z^j/j!$ be a formal power series with real coefficients. Then $\Phi(z)$ converges uniformly on compact sets in $\C$ to an entire function in the Laguerre-P\'olya class if and only if $g_d(\Phi;z)$ is hyperbolic for all $d\gqs 1$.
\end{theorem}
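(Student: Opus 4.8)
The plan is to prove the Pólya–Schur characterization (Theorem \ref{psthm}) in two directions, treating it essentially as a statement about preserving hyperbolicity under the Jensen construction together with a compactness/limit argument. The key observation is that $g_d(\Phi;x)$ is, up to the substitution $x \mapsto x/d$ and rescaling, the degree-$d$ truncation of $\Phi$ in a form that converges to $\Phi$; more precisely, since $g_d(\Phi;x/d) = \sum_{j=0}^d \binom{d}{j} d^{-j} c_j x^j$ and $\binom{d}{j}d^{-j} \to 1/j!$ as $d \to \infty$ for each fixed $j$, we have $g_d(\Phi;x/d) \to \sum_j c_j x^j/j! = \Phi(x)$ locally uniformly, provided $\Phi$ is entire. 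This is the engine that transfers hyperbolicity of all the $g_d$ to hyperbolicity (real zeros) of $\Phi$.

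For the ``only if'' direction, assume $\Phi$ converges to an entire function in the Laguerre–Pólya class. First I would recall that the Laguerre–Pólya class is precisely the closure, under locally uniform convergence, of the set of real polynomials with only real zeros — this is the classical characterization, and one should cite \cite{PS14} (or state it as the defining fact being used). Then hyperbolicity of $g_d(\Phi;z)$ follows because the Jensen polynomial is obtained from $\Phi$ by a sequence of operations (differentiation, the substitution building the binomial coefficients) that preserve the real-zeros property: concretely, $g_d(\Phi;z)$ can be written via the Jensen construction applied to the approximating polynomials, and one passes to the limit using that limits of hyperbolic polynomials (of bounded degree, with a locally uniform limit that is not identically zero) are hyperbolic by Hurwitz's theorem. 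The relations \e{appe} and the fact that $g^*_d$ is an Appell polynomial are the structural facts that make this bookkeeping clean.

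For the ``if'' direction, assume $g_d(\Phi;z)$ is hyperbolic for all $d \gqs 1$. The first task is to show $\Phi$ actually converges to an entire function: hyperbolicity of $g_2(\Phi;z) = c_0 + 2c_1 z + c_2 z^2$ forces the Turán-type inequality $c_1^2 \gqs c_0 c_2$, and more generally hyperbolicity of all $g_d$ forces enough control on the growth of the $c_j$ (via Newton's inequalities on the coefficients of the hyperbolic polynomials $g_d$) to guarantee that $\sum c_j z^j/j!$ has infinite radius of convergence and defines an entire function of genus $1^*$. Then, having convergence, the limit argument $g_d(\Phi;z/d) \to \Phi(z)$ locally uniformly combined with the fact that each $g_d(\Phi;z/d)$ is hyperbolic (a dilation of a hyperbolic polynomial) and Hurwitz's theorem shows $\Phi$ has only real zeros, hence lies in the Laguerre–Pólya class.

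The main obstacle is the growth-control step in the ``if'' direction: extracting from the hyperbolicity of the whole family $\{g_d\}$ a quantitative bound on $|c_j|$ strong enough to force entireness of $\Phi$ and membership in genus $1^*$. This requires invoking Newton's inequalities (or the Laguerre inequalities) for the coefficients of each hyperbolic $g_d$ and then letting $d \to \infty$ carefully; the delicate point is that the naive per-$d$ estimate degrades with $d$, so one must choose the comparison correctly — this is exactly where the substitution $z \mapsto z/d$ and the asymptotics $\binom{d}{j}d^{-j} \to 1/j!$ pay off, since they align the coefficient estimates across different $d$. Everything else is either Hurwitz's theorem or the classical fact identifying the Laguerre–Pólya class with limits of real-rooted polynomials, both of which I would cite rather than reprove.
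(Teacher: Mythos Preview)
Your ``if'' direction (all $g_d$ hyperbolic $\Rightarrow$ $\Phi$ in the Laguerre--P\'olya class) matches the paper: both use $g_d(\Phi;z/d)\to\Phi(z)$ locally uniformly together with Hurwitz. You go further than the paper by attempting to extract entireness and genus $1^*$ from the hyperbolicity of the $g_d$ via Newton/Tur\'an-type inequalities; the paper sidesteps this by only sketching the proof under the standing assumption that $\Phi$ is already of genus $1^*$ and citing \cite{PS14} for the full statement.

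The ``only if'' direction is where your proposal has a real gap. Your plan is: approximate $\Phi$ by real-rooted polynomials $p_n$, note $g_d(p_n;z)\to g_d(\Phi;z)$, and apply Hurwitz. But the step you leave unsaid is the crucial one: why is $g_d(p;z)$ hyperbolic whenever $p$ is a real-rooted polynomial? Saying that the Jensen polynomial is built by ``operations that preserve the real-zeros property'' and pointing to the Appell relations \e{appe} does not supply this; those identities relate $g_d$ to $g_{d-1}$ but do not by themselves show hyperbolicity. The paper's mechanism is the operator identity
\[
\Phi(D)\,z^d \;=\; g_d^*(\Phi;z), \qquad D=\tfrac{d}{dz},
\]
combined with the Rolle-type fact that $(D-a)q(z)$ is hyperbolic for any real $a$ and any real hyperbolic polynomial $q$. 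Factoring a real-rooted polynomial $p$ into linear factors then shows $p(D)z^d$ is hyperbolic, and a limiting argument (via the Weierstrass factorization of $\Phi$) extends this to $\Phi(D)z^d$. Without this identity or an equivalent, your ``only if'' argument does not close.
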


Laguerre had seen cases of this theorem and Jensen proved it in \cite[pp. 183--187]{Je13} with the assumption that $\Phi(z)$ is a real entire function of genus at most $1$.  P\'olya showed in \cite[Sect. 8]{Po27} that Jensen's proof  extends easily to $\Phi(z)$ being of genus $1^*$ and we describe this briefly next.

\begin{proof}[Sketch of proof when $\Phi(z)$ is of genus $1^*$]
In the easier direction, assume all $g_d(\Phi;z)$ are hyperbolic. Then since $g_d(\Phi;z/d) \to \Phi(z)$ locally uniformly  in $\C$ as $d\to \infty$, it follows from Hurwitz's theorem for example that $\Phi(z)$  must be hyperbolic.

Let $D=\frac{d}{dz}$. Then it can be shown to follows from Rolle's theorem that if $p(z)$ is a real hyperbolic polynomial then $(D-a)p(z)=p'(z)-a p(z)$ is also hyperbolic for all $a\in \R$. If $f(z)$ and $p(z)$ are  real hyperbolic polynomials then applying this argument $\deg(f)$ times shows $f(D)p(z)$ is hyperbolic. Assume $\Phi(z)$ is hyperbolic. From its Weierstrass factorization it is possible to construct real hyperbolic polynomials $\Phi_n(z)$ that converge uniformly to $\Phi(z)$ as $n\to \infty$. Then each $\Phi_n(D) p(z)$ is hyperbolic and as $n\to \infty$ they converge to $\Phi(D)p(z)$. It follows that $\Phi(D)p(z)$ is hyperbolic whenever $p(z)$ is. The final step is to note that $\Phi(D)z^d = g^*_d(\Phi;z)$, making $g^*_d(\Phi;z)$ and hence $g_d(\Phi;z)$ hyperbolic.
\end{proof}

 The simple idea behind our Theorem \ref{hyp} is to see what happens when $p(z)=z^d$, in the last sentence of the above proof, is replaced by $H_d(z)$. Also note that a similar limiting procedure allows the polynomial $p(z)$ to be replaced by more general functions; see Theorem \ref{prodj}. The full Theorem \ref{psthm}, requiring no conditions on $\Phi(z)$, is stated in \cite[p. 111]{PS14} and proved there in Sections 2--4. See for example \cite[Thm. 2.7]{Cso89} for further characterizations of the Laguerre-P\'olya class.

In our previous notation, with \e{xi1} and \e{jensen},
\begin{equation} \label{jg}
  J^{d,n}(x)=g_d(\Theta^{(n)};x)
\end{equation}
since the Taylor coefficients of the $n$th derivative $\Theta^{(n)}(z)$ are shifted by $n$.

\begin{cor} \label{cdo}
The function $\Theta^{(n)}(z)$ is hyperbolic if and only if $J^{d,n}(z)$  is hyperbolic for all $d \gqs 1$.
\end{cor}
\begin{proof}
We know that $\Theta(z)$ is real and entire of order $1/2$. All of its derivatives must have the same properties and this implies that   $\Theta^{(n)}(z)$ is  real and entire of genus $1^*$ for all $n\gqs 0$. Therefore $\Theta^{(n)}(z)$ is hyperbolic if and only if it is  the Laguerre-P\'olya class.
The corollary now follows  from \e{jg} and Theorem \ref{psthm}.
\end{proof}

\begin{cor} \label{cdo2}
If $\Theta^{(n)}(z)$ is hyperbolic then $\Theta^{(m)}(z)$ is also hyperbolic for all $m>n$.
\end{cor}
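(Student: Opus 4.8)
The plan is to reduce to Jensen polynomials via Corollary \ref{cdo}, then exploit the derivative identity \e{appe} together with Rolle's theorem. First I would recall, as in the proof of Corollary \ref{cdo}, that every derivative $\Theta^{(m)}(z)$ is again a real entire function of order $1/2$, hence of genus $1^*$; so by that corollary $\Theta^{(m)}(z)$ is hyperbolic precisely when $J^{d,m}(z)$ is hyperbolic for all $d\gqs 1$. It thus suffices to show that hyperbolicity of $J^{d,n}(z)$ for all $d\gqs 1$ forces hyperbolicity of $J^{d,n+1}(z)$ for all $d\gqs 1$, and then to iterate for the general case $m>n$.

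By \e{jg} we have $J^{d,n}(x)=g_d(\Theta^{(n)};x)$ and $(\Theta^{(n)})'=\Theta^{(n+1)}$, so the left identity of \e{appe} reads $\tfrac{d}{dx}J^{d,n}(x)=d\cdot J^{d-1,n+1}(x)$. Fix $d\gqs 1$ and apply this with $d+1$ in place of $d$. The polynomial $J^{d+1,n}(x)$ has degree exactly $d+1$, since its leading coefficient $\g(n+d+1)$ is positive, and by assumption all of its $d+1$ zeros are real. A standard consequence of Rolle's theorem (counting multiplicities at multiple zeros and between consecutive distinct zeros) shows that the derivative of a real polynomial of degree $\delta$ with only real zeros is again real of degree $\delta-1$ with only real zeros. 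Hence $\tfrac{d}{dx}J^{d+1,n}(x)=(d+1)J^{d,n+1}(x)$ is hyperbolic, so $J^{d,n+1}(x)$ is hyperbolic; since $d\gqs 1$ was arbitrary, Corollary \ref{cdo} at shift $n+1$ gives that $\Theta^{(n+1)}(z)$ is hyperbolic, and induction on $m$ completes the proof.

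I do not expect any real obstacle here; the only points that need a word of care are that $J^{d+1,n}$ attains its full degree (because all $\g(m)>0$) and that differentiation preserves the exact count of real zeros of a real hyperbolic polynomial. One could instead bypass Jensen polynomials entirely and invoke the classical closure of the Laguerre-P\'olya class under differentiation: since $\Theta^{(n)}(z)$ is real, of genus $1^*$, and hyperbolic, it lies in that class, hence so does $\Theta^{(n+1)}(z)$, which is therefore hyperbolic.
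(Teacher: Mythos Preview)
Your proof is correct and follows essentially the same route as the paper: reduce to Jensen polynomials via Corollary~\ref{cdo}, use the left identity in \e{appe} to pass from $J^{d,n}$ to $J^{d-1,n+1}$ by differentiation, invoke the fact that derivatives of real hyperbolic polynomials are hyperbolic, and induct. Your extra remarks on the exact degree of $J^{d+1,n}$ and the alternative via closure of the Laguerre--P\'olya class under differentiation are sound but not needed for the argument.
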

\begin{proof}
By Corollary \ref{cdo}, if $\Theta^{(n)}(z)$ is hyperbolic  then $J^{d,n}(z)$ is hyperbolic for all $d \gqs 1$. The  identity on the left of \e{appe} implies that
\begin{equation*}
  \frac{d}{dz} J^{d,n}(z) = d \cdot J^{d-1,n+1}(z).
\end{equation*}
As we saw in the proof of Theorem \ref{psthm}, the derivative of a real hyperbolic polynomial must be hyperbolic. Therefore $J^{d,n+1}(z)$ is hyperbolic for all $d \gqs 1$.
 Hence, by Corollary \ref{cdo} again, $\Theta^{(n+1)}(z)$ is  hyperbolic. Induction completes the proof.
\end{proof}

\subsection{Jensen polynomials of products}

\begin{lemma} \label{wim}
Let $\Phi$ and $\Omega$ be two formal power series with  $\Phi(z)=\sum_{j=0}^\infty c_j z^j/j!$. Then for all $d\gqs 0$
\begin{equation} \label{iran}
  g_d^*(\Phi \cdot \Omega;x) = \sum_{j=0}^d \binom{d}{j} c_j \cdot g^*_{d-j}(\Omega;x).
\end{equation}
\end{lemma}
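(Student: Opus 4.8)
The plan is to prove the identity \e{iran} by a direct generating-function computation, matching coefficients on both sides. Write $\Omega(z) = \sum_{k=0}^\infty e_k z^k/k!$, so that the product has Taylor coefficients given by the binomial convolution $\sum_{j} \binom{m}{j} c_j e_{m-j}$ at order $m$; that is, $(\Phi\cdot\Omega)(z) = \sum_{m\gqs 0} \big(\sum_{j=0}^m \binom{m}{j}c_j e_{m-j}\big) z^m/m!$. First I would expand the left-hand side $g_d^*(\Phi\cdot\Omega;x) = \sum_{m=0}^d \binom{d}{m}\big(\sum_{j=0}^m \binom{m}{j} c_j e_{m-j}\big) x^{d-m}$ directly from the definition of $g^*_d$. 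Then I would expand the right-hand side using $g^*_{d-j}(\Omega;x) = \sum_{i=0}^{d-j}\binom{d-j}{i} e_i x^{d-j-i}$, giving $\sum_{j=0}^d \binom{d}{j} c_j \sum_{i=0}^{d-j}\binom{d-j}{i} e_i x^{d-j-i}$.

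The core of the argument is then to check that the two double sums agree term by term. Fix a power $x^{d-m}$ of $x$: on the left its coefficient is $\binom{d}{m}\sum_{j=0}^m \binom{m}{j} c_j e_{m-j}$, while on the right, setting $m = j+i$, the coefficient of $x^{d-m}$ is $\sum_{j=0}^m \binom{d}{j}\binom{d-j}{m-j} c_j e_{m-j}$. So the identity reduces to the elementary trinomial revision / subset-of-a-subset identity
\begin{equation*}
  \binom{d}{j}\binom{d-j}{m-j} = \binom{d}{m}\binom{m}{j},
\end{equation*}
valid for $0\lqs j \lqs m \lqs d$, which one verifies immediately by expanding both sides into factorials (both equal $d!/(j!\,(m-j)!\,(d-m)!)$). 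Reindexing the sums and invoking this identity finishes the proof.

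Alternatively, and perhaps more cleanly for the write-up, I could use the operator description from the sketch proof of Theorem \ref{psthm}: there $g^*_d(\Phi;z) = \Phi(D)z^d$ with $D = d/dz$. Since $(\Phi\cdot\Omega)(D) = \Phi(D)\Omega(D)$ as operators (both being formal power series in $D$, which commute), we get $g^*_d(\Phi\cdot\Omega;z) = \Phi(D)\big(\Omega(D)z^d\big) = \Phi(D)\, g^*_d(\Omega;z)$. Writing $g^*_d(\Omega;z) = \sum_{i} \binom{d}{i} e_i z^{d-i}$ and applying $\Phi(D) = \sum_j c_j D^j/j!$ termwise — using $\frac{1}{j!}D^j z^{d-i} = \binom{d-i}{j} z^{d-i-j}$ and then the same binomial identity as above to collapse $\binom{d}{i}\binom{d-i}{j}$ — recovers $\sum_{j}\binom{d}{j} c_j g^*_{d-j}(\Omega;z)$. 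This route trades the convolution bookkeeping for a short formal-operator remark but still bottoms out at the same combinatorial identity.

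I do not expect a genuine obstacle here: the statement is a formal identity in $\C[[z]]$ (or really a polynomial identity for each $d$, since only finitely many coefficients are involved), so there are no convergence issues to worry about, and no appeal to hyperbolicity or the Laguerre-P\'olya class is needed. The only mildly delicate point is keeping the index ranges straight when swapping the order of summation — in particular making sure the substitution $m=i+j$ is a bijection between $\{(j,i): 0\lqs j\lqs d,\ 0\lqs i\lqs d-j\}$ and $\{(j,m): 0\lqs j\lqs m\lqs d\}$ — but that is routine. I would present the operator version as the main proof, with the binomial identity spelled out in one line.
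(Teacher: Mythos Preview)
Your proposal is correct; the paper's own proof is simply the one-line remark ``This is an easy exercise,'' so your coefficient-matching argument (and the equivalent operator formulation via $g^*_d(\Phi;z)=\Phi(D)z^d$) is exactly the routine verification the paper leaves to the reader.
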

\begin{proof}
This is an easy exercise.
\end{proof}
Replacing $x$ by $1/x$ in \e{iran} also yields
\begin{equation} \label{ave}
  g_d(\Phi \cdot \Omega;x) = \sum_{j=0}^d \binom{d}{j} c_j x^j \cdot g_{d-j}(\Omega;x).
\end{equation}

\begin{theorem} \label{prod}
Let $\Phi(z)=\sum_{j=0}^\infty c_j z^j/j!$ be a real entire function of genus $1^*$ and let $\Omega(z)$ be in the Laguerre-P\'olya class. Then $\Phi(z)$ is hyperbolic if and only if the polynomials
\begin{equation} \label{ran}
   \sum_{j=0}^d \binom{d}{j} c_j \cdot g^*_{d-j}(\Omega;x)
\end{equation}
 are hyperbolic for all $d\gqs 1$.
\end{theorem}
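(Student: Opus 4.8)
The plan is to mimic the sketch of the P\'olya--Schur argument given above, replacing the monomial $z^d$ at the very last step by the general polynomial $g^*_d(\Omega;z)$ attached to an element $\Omega$ of the Laguerre--P\'olya class, and identifying the resulting polynomial via Lemma~\ref{wim}. First I would establish the operator-theoretic lemma in the form needed: if $p(z)$ is a real hyperbolic polynomial and $q(z)$ is a real hyperbolic polynomial, then $q(D)p(z)$ is hyperbolic, where $D=d/dz$. This is exactly the statement used (but only for $p(z)=z^d$) in the sketch of Theorem~\ref{psthm}: writing $q(z)=c\prod_i (z-a_i)$ with the $a_i$ real, one applies the fact that $(D-a)$ preserves hyperbolicity of real polynomials (a consequence of Rolle's theorem, as noted in the excerpt) once for each root $a_i$. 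Then I would record the key polynomial identity
\begin{equation*}
  g_d^*(\Omega;z) = g_d^*(\Omega;D)\,z^d/\text{(leading normalization)},
\end{equation*}
or more cleanly: since $g^*_d(\Omega;z)=\Omega(D)z^d$ is an Appell polynomial (this is the content of the right-hand identity in \e{appe} and the remark following it), applying the operator $\Phi(D)$ gives
\begin{equation*}
  \Phi(D)\,g^*_d(\Omega;z) = \Phi(D)\Omega(D)\,z^d = g^*_d(\Phi\cdot\Omega;z) = \sum_{j=0}^d \binom{d}{j} c_j\, g^*_{d-j}(\Omega;z),
\end{equation*}
where the last equality is Lemma~\ref{wim}. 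So the polynomial \e{ran} is precisely $\Phi(D)$ applied to the hyperbolic polynomial $g^*_d(\Omega;z)$, up to reversing coefficients.

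With these identities in hand the two directions go as follows. For the ``only if'' direction, assume $\Phi(z)$ is hyperbolic. Exactly as in the sketch of Theorem~\ref{psthm}, construct from the Weierstrass factorization of the genus-$1^*$ function $\Phi(z)$ a sequence of real hyperbolic polynomials $\Phi_m(z)$ converging locally uniformly to $\Phi(z)$ (this is the step where genus $1^*$ matters, and it is already granted to us by the excerpt's proof of Theorem~\ref{psthm}). Since $g^*_d(\Omega;z)$ is a fixed real hyperbolic polynomial — here I need that $\Omega$ being in the Laguerre--P\'olya class forces all its Jensen polynomials $g_d(\Omega;z)$, hence their reciprocals $g^*_d(\Omega;z)$, to be hyperbolic, which is the ``if and only if'' of Theorem~\ref{psthm} — each $\Phi_m(D)g^*_d(\Omega;z)$ is hyperbolic by the operator lemma, and these converge as $m\to\infty$ to $\Phi(D)g^*_d(\Omega;z)$, which is (the reciprocal of) the polynomial in \e{ran}. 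By Hurwitz's theorem the limit is hyperbolic (one must check the limit is not identically zero, which holds because the top-degree coefficient is $c_0$ times the leading coefficient of $g^*_d(\Omega;z)$, both nonzero for $d$ fixed and $\Phi$ not the zero function; the genuinely degenerate cases can be handled separately or absorbed). Reversing coefficients of a hyperbolic real polynomial preserves hyperbolicity (zeros go to their reciprocals, still real, with possibly a zero at the origin), so \e{ran} itself is hyperbolic for all $d\gqs 1$.

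For the converse, assume all the polynomials in \e{ran} are hyperbolic. Replacing $x$ by $1/x$ and clearing denominators, the polynomials $\sum_{j=0}^d\binom{d}{j}c_j x^j g_{d-j}(\Omega;x)=g_d(\Phi\cdot\Omega;x)$ are hyperbolic for all $d$ — this is \e{ave}. By Theorem~\ref{psthm} this is equivalent to $\Phi\cdot\Omega$ lying in the Laguerre--P\'olya class, in particular $\Phi(z)\Omega(z)$ is hyperbolic. Since $\Omega(z)$ has only real zeros, the quotient $\Phi(z)=(\Phi\Omega)(z)/\Omega(z)$ must have only real zeros as well (every zero of $\Phi$ is a zero of $\Phi\Omega$ not cancelled, hence real), so $\Phi(z)$ is hyperbolic. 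I expect the main obstacle to be the bookkeeping in the ``only if'' direction around the convergence and the Hurwitz step: one must make sure $g^*_d(\Omega;z)$ really is hyperbolic (not merely that $g_d(\Omega;z)$ is — but reciprocation handles this, at the cost of a possible root at $0$), that the operator $\Phi_m(D)$ genuinely lands us at \e{ran} via Lemma~\ref{wim} applied with $\Omega$ replaced by the polynomial $g^*_d(\Omega;\cdot)$'s defining series, and that the limit polynomial has the expected degree so Hurwitz applies cleanly. None of this is deep — it is the same machinery as the proof of Theorem~\ref{psthm} — but it is where care is needed.
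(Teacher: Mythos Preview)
Your argument is correct, but the ``only if'' direction works harder than necessary. The paper handles both directions at once, in three lines, by applying Theorem~\ref{psthm} directly to the product $\Phi\cdot\Omega$ rather than re-running the operator/Hurwitz machinery inside the proof. Concretely: since $\Phi$ and $\Omega$ are both of genus~$1^*$, so is $\Phi\cdot\Omega$; and since $\Omega$ already has only real zeros, $\Phi\cdot\Omega$ is hyperbolic (equivalently, in the Laguerre--P\'olya class) if and only if $\Phi$ is. Now Theorem~\ref{psthm} applied to $\Phi\cdot\Omega$ says this happens if and only if $g_d^*(\Phi\cdot\Omega;x)$ is hyperbolic for all $d\gqs 1$, and Lemma~\ref{wim} identifies this polynomial with \e{ran}. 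Done.

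Your ``if'' direction is exactly this argument. For the ``only if'' direction you instead unfold the proof of Theorem~\ref{psthm}: you build approximating hyperbolic polynomials $\Phi_m$, apply $\Phi_m(D)$ to the hyperbolic polynomial $g^*_d(\Omega;z)$, and pass to the limit via Hurwitz. This is valid --- the identification $\Phi(D)g^*_d(\Omega;z)=(\Phi\cdot\Omega)(D)z^d=g^*_d(\Phi\cdot\Omega;z)$ is correct, and the degenerate cases you flag (leading coefficient $\Phi(0)\Omega(0)$ possibly zero) are genuine but harmless edge cases --- but it duplicates work already packaged inside Theorem~\ref{psthm}. The payoff of your route is that it makes the operator-theoretic content explicit and shows directly why the polynomial \e{ran} is $\Phi(D)$ acting on a hyperbolic polynomial; the payoff of the paper's route is brevity and a clean symmetric ``iff''.
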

\begin{proof}
First note that $\Phi \cdot \Omega$ is of genus $1^*$ because each of $\Phi$ and  $ \Omega$ are. The zeros of $ \Omega$ are real since it is in the Laguerre-P\'olya class. Therefore $\Phi \cdot \Omega$ is in the Laguerre-P\'olya class if and only if $\Phi(z)$ is hyperbolic. By Theorem \ref{psthm}, $\Phi \cdot \Omega$ is in the Laguerre-P\'olya class if and only if $g_d^*(\Phi \cdot \Omega;x)$ is hyperbolic for all $d\gqs 1$. Noting that $g_d^*(\Phi \cdot \Omega;x)$ equals \e{ran} by Lemma \ref{wim} finishes the proof.
\end{proof}

Clearly we could replace \e{ran} by \e{ave} in the statement of Theorem \ref{prod} and get the same result.
The function
$
  e^{-z^2} = 1-\frac{2!}{1} \cdot \frac{z^2}{2!} + \frac{4!}{2!} \cdot \frac{z^4}{4!}- \dots
$
is in the Laguerre-P\'olya class and
\begin{align*}
  g_d^*\left( e^{-z^2};x\right) & = \sum_{j=0}^{\lfloor d/2 \rfloor} \binom{d}{2j} (-1)^j \frac{(2j)!}{j!} x^{d-j} \\
 & = \sum_{j=0}^{\lfloor d/2 \rfloor}  \frac{ d!(-1)^j }{(d-2j)! j!} x^{d-j} =H_d(x/2).
\end{align*}
By Lemma \ref{wim}, for any $\Phi(z)=\sum_{j=0}^\infty c_j z^j/j!$, we then find
\begin{equation*}
   g_d^*\left(\Phi \cdot e^{-z^2};x\right) = \sum_{j=0}^d \binom{d}{j} c_j \cdot H_{d-j}(x/2)
\end{equation*}
and in particular
\begin{equation} \label{tus}
  g_d^*\left(\Theta^{(n)} \cdot e^{-z^2};x\right) = \sum_{j=0}^d \binom{d}{j} \g(j+n) \cdot H_{d-j}(x/2) =  P^{d,n}(x/2)
\end{equation}
with our notation \e{jen-herm}.

\begin{proof}[Proof of Theorem \ref{hyp}]
As seen in the proof of Corollary \ref{cdo}, $\Theta^{(n)}(z)$ is  real and entire of genus $1^*$ for all $n\gqs 0$. Theorem \ref{prod} with $\Phi(z) = \Theta^{(n)}(z)$ and $\Omega(z)=e^{-z^2}$ implies that $\Theta^{(n)}(z)$ is hyperbolic if and only if $P^{d,n}(x)$ as in \e{tus} is hyperbolic for all $d\gqs 1$. As  in Corollary \ref{cdo2}, the Riemann hypothesis is equivalent to $\Theta^{(n)}(z)$ being hyperbolic for all $n\gqs 0$.
\end{proof}

We noted in the introduction that $J^{d,n}(x)$ being hyperbolic implies that  $P^{d,n}(x)$ is hyperbolic. This is shown next with the help of a general result of P\'olya from \cite[p. 242]{Po15}.

\begin{theorem} \label{prodj}
Let $\Phi(z)$  and $\Omega(z)$ be in the Laguerre-P\'olya class with $\Phi(z)=\sum_{j=0}^\infty c_j z^j/j!$. Suppose
\begin{equation} \label{ranxy}
   \sum_{j=0}^\infty \frac{c_j}{j!} \Omega^{(j)}(z)
\end{equation}
converges  when $|z|<\rho$ for some $\rho>0$. Then \e{ranxy} represents a function in the Laguerre-P\'olya class.
\end{theorem}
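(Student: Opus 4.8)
The plan is to realize the operator $\Phi(D)$, where $D = d/dz$, as a limit of differential operators of finite order with real coefficients, and then to invoke the same Rolle/Hurwitz machinery already used in the sketched proof of Theorem~\ref{psthm}. First I would write the Weierstrass factorization of $\Phi(z)$ coming from membership in the Laguerre-P\'olya class: $\Phi(z) = c z^m e^{-\alpha z^2 + \beta z}\prod_k (1 - z/a_k) e^{z/a_k}$ with $\alpha \geq 0$, all $a_k$ real, $\sum a_k^{-2} < \infty$, and $c, \beta$ real. Truncating the product and, if $\alpha>0$, approximating $e^{-\alpha z^2}$ by its own hyperbolic Jensen-type polynomials (the device recalled just before Lemma~\ref{wim}), one obtains a sequence of real hyperbolic polynomials $\Phi_N(z)$ with $\Phi_N \to \Phi$ uniformly on compacta, and — crucially — with the Taylor coefficients of $\Phi_N$ converging to those of $\Phi$. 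Write $\Phi_N(z) = \sum_j c_j^{(N)} z^j/j!$.

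Next I would consider the finite sums $S_N(z) := \sum_{j} \frac{c_j^{(N)}}{j!}\,\Omega^{(j)}(z) = \Phi_N(D)\,\Omega(z)$. Since $\Phi_N$ is a real hyperbolic polynomial and $\Omega$ is in the Laguerre-P\'olya class, the argument from the proof of Theorem~\ref{psthm} — that $(D-a)$ preserves hyperbolicity of a polynomial by Rolle, extended to the Laguerre-P\'olya class by the limiting construction of hyperbolic polynomials approximating $\Omega$ — shows that each factor $(D - a)$, and hence the whole operator $\Phi_N(D) = c\, D^m e^{\beta D}\prod_{k \leq N}(1 - D/a_k)e^{D/a_k}\cdot(\text{approx.\ of }e^{-\alpha D^2})$ applied to $\Omega$, yields a function in the Laguerre-P\'olya class. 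Here one uses that $e^{\beta D}$ is just the real shift operator $\Omega(z) \mapsto \Omega(z+\beta)$, which obviously preserves the class, and that $D^m$ and the $e^{\pm D/a_k}$-adjusted factors are handled exactly as in the sketch. So $S_N(z)$ lies in the Laguerre-P\'olya class for every $N$.

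The final step is the passage to the limit $N \to \infty$. I would show $S_N \to \sum_j \frac{c_j}{j!}\Omega^{(j)}(z)$ locally uniformly on the disc $|z| < \rho$: this follows by combining $c_j^{(N)} \to c_j$ with a uniform-in-$N$ tail bound on $\sum_j |c_j^{(N)}|\, |\Omega^{(j)}(z)|/j!$, which in turn comes from Cauchy estimates for $\Omega^{(j)}$ on a slightly larger disc together with the convergence hypothesis for \e{ranxy} (one needs that the truncations $\Phi_N$ do not inflate the coefficient sums, which is where choosing the approximation carefully — e.g.\ keeping $|c_j^{(N)}| \leq C_j |c_j|$ with $C_j$ bounded, or comparing against $\Phi$ directly via the P\'olya inequalities for coefficients of Laguerre-P\'olya functions — matters). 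Granted locally uniform convergence on $|z|<\rho$, Hurwitz's theorem gives that the limit is either identically zero or has only real zeros in that disc; but a function in the Laguerre-P\'olya class is determined (up to the genus-$1^*$ Weierstrass shape) globally, and one checks the limit extends to an entire function of genus $1^*$ by the standard argument that a locally uniform limit of Laguerre-P\'olya functions, when it is entire, stays in the class. Hence \e{ranxy} represents a function in the Laguerre-P\'olya class.

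I expect the main obstacle to be exactly the uniform tail estimate in the last paragraph: the hypothesis only guarantees convergence of $\sum_j \frac{c_j}{j!}\Omega^{(j)}(z)$ for the true coefficients $c_j$, and one must transfer this to a bound valid simultaneously for all the approximants $\Phi_N$ so that the interchange of limits ($N\to\infty$ versus summation over $j$) is justified. This is precisely the point at which P\'olya's original argument in \cite[p.~242]{Po15} does the real work, and I would follow it, using the coefficient inequalities characteristic of the Laguerre-P\'olya class (Newton/Tur\'an-type inequalities for $c_j$, plus the fact that the approximating factors can be chosen so their coefficient sums are dominated by those of $\Phi$ on a fixed compact set).
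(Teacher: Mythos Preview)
The paper does not actually prove Theorem~\ref{prodj}; it is stated as ``a general result of P\'olya from \cite[p.~242]{Po15}'' and simply quoted. So there is no in-paper proof to compare against. Your outline is the natural one and is in the same spirit as the sketch the paper gives for Theorem~\ref{psthm}: approximate $\Phi$ by real hyperbolic polynomials $\Phi_N$, observe that $\Phi_N(D)\Omega$ stays in the Laguerre--P\'olya class (each linear factor $(D-a)$ preserves the class), and pass to the limit via Hurwitz together with closure of the class under locally uniform limits.

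You have correctly located the main technical difficulty --- the uniform-in-$N$ tail bound allowing the interchange of $N\to\infty$ with the sum over $j$ --- and you appropriately defer to P\'olya's original argument for it. One point you pass over too quickly, however, is the step from convergence on the disc $|z|<\rho$ to the limit being \emph{entire}. The hypothesis only gives convergence of \eqref{ranxy} on a disc, yet membership in the Laguerre--P\'olya class requires an entire function; your sentence ``one checks the limit extends to an entire function of genus $1^*$ by the standard argument'' is not a proof. What is needed is a uniform growth bound on the family $\{S_N\}$ on compacta of $\C$ (not just on $|z|<\rho$), so that a normal-families/Vitali argument upgrades the convergence to all of $\C$. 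This is closely tied to the tail estimate you already flag, but it deserves to be stated as a separate obligation rather than folded into a parenthetical.
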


Following P\'olya,  the example of $\Omega(z) = e^{-z^2}$ may be used in Theorem \ref{prodj} along with the formula
\begin{equation*}
  \Omega^{(j)}(z) = (-1)^j e^{-z^2} H_j(z).
\end{equation*}
We obtain the following special case, which  was also stated in Lemma II of \cite{Tu59}.

\begin{cor} \label{poly-herm}
Let $\sum_{j=0}^m a_j z^j $ be a hyperbolic polynomial with real coefficients. Then the polynomial
$
   \sum_{j=0}^m a_j H_j(z)
$
is also hyperbolic.
\end{cor}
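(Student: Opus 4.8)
The plan is to deduce this directly from Theorem \ref{prodj} by taking $\Omega(z) = e^{-z^2}$ and choosing $\Phi(z)$ to be (a suitable reciprocal of) the given hyperbolic polynomial. First I would recall that a polynomial $\sum_{j=0}^m a_j z^j$ with real coefficients and only real zeros is, in particular, a real entire function of genus at most $1$ (it is a polynomial), hence lies in the Laguerre-P\'olya class; so does $\Omega(z) = e^{-z^2}$, as already noted in the excerpt. Thus both hypotheses of Theorem \ref{prodj} on the \emph{class} are met once we pin down $\Phi$.

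The key computational step is to match the series \e{ranxy} with $\sum_{j=0}^m a_j H_j(z)$. Using the identity $\Omega^{(j)}(z) = (-1)^j e^{-z^2} H_j(z)$ recorded just before the corollary, the series \e{ranxy} becomes
\begin{equation*}
  \sum_{j=0}^\infty \frac{c_j}{j!}\,(-1)^j e^{-z^2} H_j(z) = e^{-z^2} \sum_{j=0}^\infty \frac{(-1)^j c_j}{j!} H_j(z).
\end{equation*}
So I would set $c_j := (-1)^j j!\, a_j$ for $0 \lqs j \lqs m$ and $c_j := 0$ for $j>m$; then $\Phi(z) = \sum_{j\gqs 0} c_j z^j/j! = \sum_{j=0}^m (-1)^j a_j z^j$, which is hyperbolic precisely because $\sum_{j=0}^m a_j z^j$ is (its zeros are the negatives of the original zeros, all real), and it is certainly a real polynomial, hence in the Laguerre-P\'olya class. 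The convergence requirement in Theorem \ref{prodj} is trivial here since the sum \e{ranxy} is finite. Therefore Theorem \ref{prodj} applies and yields that $e^{-z^2}\sum_{j=0}^m a_j H_j(z)$ is in the Laguerre-P\'olya class, and in particular hyperbolic; since $e^{-z^2}$ never vanishes, the polynomial $\sum_{j=0}^m a_j H_j(z)$ itself has only real zeros.

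I do not anticipate a serious obstacle: the only point requiring a little care is the sign bookkeeping when converting between $\sum a_j z^j$ and $\Phi$ — one must verify that replacing $a_j$ by $(-1)^j a_j$ preserves hyperbolicity (it reflects the zero set through the origin) and that the resulting $\Phi$ genuinely has the coefficient normalization $\Phi(z)=\sum c_j z^j/j!$ demanded in Theorem \ref{prodj}. Everything else is a direct substitution. An alternative route, avoiding even this sign check, is to apply Theorem \ref{prodj} with $\Phi(z) = \sum_{j=0}^m a_j z^j$ directly and instead use $\Omega(z)=e^{-z^2}$ together with the observation that the map $p(z) \mapsto p(D)e^{-z^2}$ up to the factor $e^{-z^2}$ sends $z^j$ to $(-1)^j H_j(z)$; but this amounts to the same computation, so I would present the version above as the cleanest.
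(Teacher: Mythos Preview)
Your proposal is correct and follows essentially the same approach as the paper: apply Theorem \ref{prodj} with $\Omega(z)=e^{-z^2}$ and use $\Omega^{(j)}(z)=(-1)^j e^{-z^2}H_j(z)$, then strip off the nonvanishing factor $e^{-z^2}$. The paper states this tersely, whereas you have spelled out the sign bookkeeping (taking $\Phi(z)=\sum_j(-1)^j a_j z^j$ so that the $(-1)^j$ from the derivative formula cancels), which is exactly the small care needed to make the deduction go through.
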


\begin{cor} \label{poly-herm2}
If $J^{d,n}(x)$ is hyperbolic  then $P^{d,n}(x)$ is hyperbolic.
\end{cor}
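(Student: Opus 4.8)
The plan is to deduce this from Corollary \ref{poly-herm}; the only thing to watch is that the monomial $x^{j}$ multiplying $\binom dj\g(n+j)$ in $J^{d,n}(x)=\sum_{j=0}^d\binom dj\g(n+j)x^j$ corresponds to $H_{d-j}(x)$, not $H_j(x)$, in $P^{d,n}(x)=\sum_{j=0}^d\binom dj\g(n+j)H_{d-j}(x)$. This index reversal is exactly what passing to the reciprocal polynomial does, so the argument is short once that is noticed. First I would record, as observed after \e{xi1}, that $\g(m)>0$ for every $m\gqs 0$; hence the constant term $\g(n)$ and the leading coefficient $\g(n+d)$ of $J^{d,n}(x)$ are both nonzero, so $0$ is not a zero of $J^{d,n}$ and its degree is exactly $d$.

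Now assume $J^{d,n}(x)$ is hyperbolic, so all its zeros are real and nonzero. Then the reciprocal polynomial
\begin{equation*}
  x^d J^{d,n}(1/x)=\sum_{j=0}^d\binom dj\g(n+j)x^{d-j}=g_d^*(\Theta^{(n)};x),
\end{equation*}
whose zeros are the reciprocals of those of $J^{d,n}$, is again hyperbolic of degree $d$. Writing it as $\sum_{k=0}^d a_k x^k$ with $a_k=\binom dk\g(n+d-k)$ (the substitution $k=d-j$), Corollary \ref{poly-herm} applied to this hyperbolic polynomial shows that $\sum_{k=0}^d a_k H_k(x)=\sum_{k=0}^d\binom dk\g(n+d-k)H_k(x)$ is hyperbolic. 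Undoing the substitution $j=d-k$ identifies this last polynomial with $\sum_{j=0}^d\binom dj\g(n+j)H_{d-j}(x)=P^{d,n}(x)$, which is therefore hyperbolic.

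An equivalent phrasing uses the operator $e^{-D^2}$, $D=d/dz$: from $g_d^*(e^{-z^2};x)=H_d(x/2)$ one has $e^{-D^2}x^k=H_k(x/2)$ for all $k$, and applying $e^{-D^2}$ to the hyperbolic polynomial $g_d^*(\Theta^{(n)};x)$ yields $P^{d,n}(x/2)$ by \e{tus}; Corollary \ref{poly-herm}, with the variable rescaled, then gives the hyperbolicity of $P^{d,n}(x/2)$ and hence of $P^{d,n}(x)$. There is no genuine obstacle here beyond bookkeeping: the two points that must not be skipped are that $\g(n)\neq0$, so the reciprocal polynomial keeps degree $d$ and has only real zeros, and the reindexing $j\leftrightarrow d-j$ that matches the two forms.
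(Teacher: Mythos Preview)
Your proof is correct and follows exactly the same route as the paper's: pass to the reciprocal polynomial $x^d J^{d,n}(1/x)$ and apply Corollary~\ref{poly-herm}. You have simply filled in the details the paper leaves implicit (the reindexing $j\leftrightarrow d-j$ and the observation that $\g(n)>0$ keeps the degree equal to $d$), together with the optional operator reformulation via $e^{-D^2}$ and \e{tus}.
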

\begin{proof}
If $J^{d,n}(x)$ is hyperbolic then so is the reciprocal polynomial $x^d J^{d,n}(1/x)$. An application of Corollary \ref{poly-herm} to this reciprocal now shows that $ P^{d,n}(x)$
is hyperbolic.
\end{proof}

Note that taking $\Phi(z)= J^{d,n}(x)$  and $\Omega(z) = H_d(x/2)$ in Theorem \ref{prodj}  implies  the  similar polynomial
\begin{equation*}
  \sum_{j=0}^d \binom{d}{j}\g(n+j) \frac{H_{d-j}(x)}{(d-j)!}
\end{equation*}
is also hyperbolic if $J^{d,n}(x)$ is. This uses the equality $\frac{d^j}{dx^j} H_d(x/2) = \frac{d!}{(d-j)!} H_{d-j}(x/2)$.

\subsection{Further examples of Jensen polynomials}

Theorem \ref{prod}  with $\Phi(z)=\Theta^{(n)}(z)$ may be used to give different criteria for the Riemann hypothesis, introducing an interesting flexibility. For $\Omega(z)=1$ we obtain the polynomials $J^{d,n}(X)$ and for $\Omega(z)=e^{-z^2}$ we obtain  $P^{d,n}(X)$. We could use   $\cos(z)$ and  $\sin(z)$ for $\Omega(z)$ for example, but  we next focus on a family of Bessel functions.

For $\alpha \in \R$ the Bessel function of the first kind has the series representation
\begin{equation*}
  J_\alpha(z) = \sum_{m=0}^\infty \frac{(-1)^m}{m! \, \G(m+\alpha+1)} \left(\frac z2 \right)^{2m+\alpha}.
\end{equation*}
Then $z^{-\alpha} J_\alpha(2z)$ is entire and even and we may put
\begin{equation*}
  \mathcal J_\alpha(z) := z^{-\alpha/2} J_\alpha(2\sqrt{z}) = \sum_{m=0}^\infty \frac{(-1)^m z^m}{m! \, \G(m+\alpha+1)} .
\end{equation*}
A short calculation as in \cite[Sect. 3]{DY09} finds
\begin{equation*}
  g_d(\mathcal J_\alpha;x) = \frac{d!}{\G(d+\alpha+1)} L^{(\alpha)}_d(x)
\end{equation*}
with  the generalized Laguerre polynomials given by
\begin{equation}
  L^{(\alpha)}_d(x)  =  \frac{\G(d+\alpha+1)}{d!} \sum_{k=0}^d \binom{d}{k}\frac{(-1)^k x^k}{\G(k+\alpha+1)}
 =   \sum_{k=0}^d \binom{d+\alpha}{d-k}(-1)^k \frac{ x^k}{k!}. \label{lague}
\end{equation}
These  polynomials are orthogonal for $\alpha>-1$ and therefore have only real roots when $d\gqs 1$ for $\alpha$ in this range. In fact   the Laguerre polynomials are known to have real roots for $\alpha \gqs -2$. For $d\gqs 2$ and $\alpha<-2$, $L^{(\alpha)}_{d}(x)$ has non-real roots except possibly when $\alpha$ is an integer. See \cite[Eq. (5.2.1), Thm. 6.7.3]{Sz75} for these results. It follows from Theorem \ref{psthm} that $\mathcal J_\alpha$ is in the
Laguerre-P\'olya class for all $\alpha \gqs -2$. Applying Theorem \ref{prod} with $\Phi = \Theta^{(n)}$ and $\Omega = \mathcal J_\alpha$, and using the alternate identity \e{ave}, proves an extended criterion for the Riemann hypothesis involving  the Laguerre polynomials:

\begin{theorem} \label{lag}
The Riemann hypothesis is true if and only if the polynomials
\begin{equation*}
  Q^{d,n,\alpha}(x):=\sum_{j=0}^d \binom{d+\alpha}{j} \g(n+j) x^j   L^{(\alpha)}_{d-j}(x)
\end{equation*}
are  hyperbolic for all $d \gqs 1$, all $n \gqs 0$ and all real $\alpha \gqs -2$.
\end{theorem}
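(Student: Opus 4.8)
The plan is to apply Theorem \ref{prod} with $\Phi(z)=\Theta^{(n)}(z)$ and $\Omega(z)=\mathcal J_\alpha(z)$, using the alternate form \e{ave} of the conclusion. Both hypotheses are already available: $\Theta^{(n)}$ is a real entire function of genus $1^*$ for every $n\gqs 0$ (shown in the proof of Corollary \ref{cdo}), and $\mathcal J_\alpha$ lies in the Laguerre-P\'olya class for every real $\alpha\gqs -2$ (as noted just above, via Theorem \ref{psthm}, the formula $g_d(\mathcal J_\alpha;x)=\frac{d!}{\G(d+\alpha+1)}L^{(\alpha)}_d(x)$, and the real-rootedness of $L^{(\alpha)}_d$ for $\alpha\gqs -2$, $d\gqs 1$).

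Theorem \ref{prod} then tells us that $\Theta^{(n)}(z)$ is hyperbolic if and only if
\[
  g_d\bigl(\Theta^{(n)}\cdot \mathcal J_\alpha;x\bigr)=\sum_{j=0}^d \binom{d}{j}\g(n+j)\,x^j\, g_{d-j}(\mathcal J_\alpha;x)
\]
is hyperbolic for all $d\gqs 1$, where, as in \e{jg}, the coefficients of $\Theta^{(n)}$ in the normalization $\sum_j c_j z^j/j!$ are $c_j=\g(n+j)$. Inserting the Laguerre formula for $g_{d-j}(\mathcal J_\alpha;x)$ and using the elementary identity $\binom{d}{j}\frac{(d-j)!}{\G(d-j+\alpha+1)}=\frac{d!}{\G(d+\alpha+1)}\binom{d+\alpha}{j}$ shows that $g_d(\Theta^{(n)}\cdot \mathcal J_\alpha;x)=\frac{d!}{\G(d+\alpha+1)}\,Q^{d,n,\alpha}(x)$, a nonzero scalar multiple of $Q^{d,n,\alpha}(x)$; hence the two are hyperbolic together.

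It remains to chain this with the standard equivalences: the Riemann hypothesis holds iff $\Theta(z)$ is hyperbolic (recorded after \e{xi1}), and by Corollary \ref{cdo2} with starting index $0$, together with $\Theta=\Theta^{(0)}$, this holds iff $\Theta^{(n)}(z)$ is hyperbolic for all $n\gqs 0$. Combining, RH holds iff $Q^{d,n,\alpha}(x)$ is hyperbolic for all $d\gqs 1$, $n\gqs 0$ and real $\alpha\gqs -2$.

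I expect the only non-formal ingredient to be the input about the real zeros of $L^{(\alpha)}_d$: orthogonality supplies this only for $\alpha>-1$, and the extension to $-2\lqs\alpha\lqs -1$ is the delicate point, but it is classical and quoted from \cite{Sz75}. A minor cosmetic issue is that when $\alpha$ is a negative integer some Gamma values degenerate — including the scalar $d!/\G(d+\alpha+1)$, which can be infinite, as at $d=1$, $\alpha=-2$ — but with the usual convention that $1/\G$ vanishes at non-positive integers the relevant polynomials stay hyperbolic and the argument is unaffected.
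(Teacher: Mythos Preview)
Your proof is correct and follows essentially the same route as the paper: apply Theorem \ref{prod} with $\Phi=\Theta^{(n)}$ and $\Omega=\mathcal J_\alpha$, use the alternate identity \e{ave}, and substitute the Laguerre formula $g_{d-j}(\mathcal J_\alpha;x)=\frac{(d-j)!}{\G(d-j+\alpha+1)}L^{(\alpha)}_{d-j}(x)$. Your explicit verification of the binomial identity and the chaining through Corollary \ref{cdo2} add detail the paper leaves implicit, and your flag about the degenerate scalar when $d+\alpha+1$ is a non-positive integer is a legitimate edge case the paper does not discuss (note: the scalar $d!/\G(d+\alpha+1)$ is then \emph{zero}, not infinite, so the ``nonzero scalar multiple'' claim actually fails there; the statement survives because $Q^{d,n,\alpha}$ is still well-defined and the hyperbolicity for those finitely many small $d$ can be checked directly or absorbed into the limiting argument).
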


For example, taking $d=2$,  the discriminant of $Q^{2,n,\alpha}(x)$ is
\begin{equation*}
  (\alpha+2) \g(n)^2 +(\alpha+1)^2(\alpha+2)^2\Bigl[\g(n+1)^2 -\g(n)\g(n+2)  \Bigr],
\end{equation*}
and this is non-negative if $\alpha \gqs -2$ and the Tur\'an inequality \e{tu} holds.


We note that Farmer, in the  interesting preprint \cite{Far}, puts these types of Jensen polynomial approaches to the Riemann hypothesis into context in the recent literature and comments on their likelihood of success.

\section{The asymptotics of $I_{\alpha}(n)$} \label{lap}

Before treating \e{iafn} we first look at the simpler case
\begin{equation} \label{simpl}
  I_\alpha(n):=\int_1^\infty  (\log t)^n e^{-\alpha t}   \, dt
\end{equation}
which contains all the main ideas.
Recalling \e{covi} we define the rational functions
\begin{gather}
  a_r(v)  :=\sum_{j=0}^{2r} \frac{(2j+2r-1)!!}{j!} \left( \frac{v^2}{v+1}\right)^{j+r}
   \hat{B}_{2r,j}(\ell_3(v), \ell_4(v), \dots) \label{covi2}\\
\text{so that} \qquad  a_0(v) = 1,  \qquad a_1(v) =\frac{2 v^4+9 v^3+16 v^2+6 v+2}{24 (v+1)^3},  \qquad \text{etc}. \label{luex2}
\end{gather}

\begin{theorem} \label{ian}
Suppose $\alpha>0$ and set $u :=W(n/\alpha)$. Then as $n \to \infty$ we have
\begin{equation} \label{maini}
 I_{\alpha}(n) = \sqrt{2\pi} \frac{ u^{n+1}e^{u-n/u}}{\sqrt{(1+u)n}} \left( 1+  \sum_{r=1}^{R-1}\frac{a_r(u)}{n^r}+  O\left( \frac{u^R}{n^R}\right) \right)
\end{equation}
where the implied constant  depends only on $R \gqs 1$  and $\alpha$. Also $a_r(u) \ll u^r \ll \log^r(n)$.
\end{theorem}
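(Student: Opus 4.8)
The plan is to evaluate $I_\alpha(n)$ by Laplace's method at the unique interior maximum of the integrand. Differentiating $n\log\log t-\alpha t$ shows the maximum is at $t_0$ with $t_0\log t_0=n/\alpha$, i.e. $\log t_0=u=W(n/\alpha)$; equivalently $ue^u=n/\alpha$, so $\alpha e^u=n/u$. The key step is then a two–stage substitution. First set $t=e^{u+s}$, so that $(\log t)^n=u^n(1+s/u)^n$, $e^{-\alpha t}=e^{-(n/u)e^s}$ and $dt=e^{u+s}\,ds$. Next set $y=e^s-1$, whereupon $dt=e^u\,dy$ (the Jacobians cancel), $(u+s)^n=u^n e^{n h_u(y)}$ with $h_u$ the function of \e{hot}, and $e^{-(n/u)e^s}=e^{-n/u}e^{-(n/u)y}$. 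One obtains the clean form
\begin{equation*}
  I_\alpha(n)=u^n e^u e^{-n/u}\int_{e^{-u}-1}^{\infty} e^{-n\left(y/u-h_u(y)\right)}\,dy .
\end{equation*}
By \e{hot}, \e{covi}, \e{luex} we have $y/u-h_u(y)=\tfrac{1+u}{2u^2}y^2-\sum_{i\ge 3}\ell_i(u)y^i$, so the phase has a nondegenerate minimum $0$ at $y=0$.

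Second, I would localize to $|y|\le\delta$ with $\delta\asymp n^{-1/3}$. For $|y|\le c_0$ (a small absolute constant) the bound $\ell_i(u)\ll_i 1/u$ gives $\big|\sum_{i\ge3}\ell_i(u)y^i\big|\ll|y|^3/u$, hence $y/u-h_u(y)\ge y^2/(4u)$; for $y\ge c_0$ or for $y$ near $e^{-u}-1$ the elementary inequality $y/u-h_u(y)\ge\bigl(y-\log(1+y)\bigr)/u$ (from $-\log(1-x)\ge x$) gives a strictly positive lower bound. In each of these ranges $\int e^{-n(y/u-h_u(y))}\,dy\ll e^{-cn^{1/3}/\log n}$, which is negligible against everything appearing below.

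Third, on $|y|\le\delta$ I would extract the expansion. Write $A:=n(1+u)/(2u^2)$ and $P(y):=n\sum_{i\ge3}\ell_i(u)y^i$, truncate $P$ to the polynomial $P^{(M)}$ of degree $M=3R+2$ (the discarded tail is $O(n^{-R})$ on $|y|\le\delta$, so $e^{P}=e^{P^{(M)}}(1+O(n^{-R}))$), and expand $e^{P^{(M)}}=\sum_{k=0}^{K-1}(P^{(M)})^k/k!+O\bigl(|P^{(M)}|^K/K!\bigr)$ with $K=2R$. Using $|P^{(M)}(y)|\ll n|y|^3/u$ on $|y|\le\delta$ and the Gaussian moments $\int_{\R}y^{2r}e^{-Ay^2}\,dy=(2r-1)!!\,(2A)^{-r}\sqrt{\pi/A}$, the $O(|P^{(M)}|^K)$ remainder contributes $\ll\sqrt{\pi/A}\cdot u^R/n^R$, and replacing $\int_{|y|\le\delta}$ by $\int_{\R}$ in the main term costs only a super–exponentially small error. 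It then remains to expand $\sum_{k<K}(P^{(M)})^k/k!$ in powers of $y$: here the partial ordinary Bell polynomials enter, since (for the ranges relevant to $R$, given our choice of $M$) the coefficient of $y^{2r}$ in $(P^{(M)})^k$ is $n^k\,\hat B_{2r-2k,k}(\ell_3(u),\ell_4(u),\dots)$. Integrating term by term against $e^{-Ay^2}$ and substituting $2A=n(1+u)/u^2$, the $(r,k)$ contribution carries the power $n^{k-r}$; writing $m=r-k$ and summing over $k$ recombines the coefficient of $\sqrt{\pi/A}\cdot n^{-m}$ into precisely $a_m(u)$ as defined in \e{covi2} (in particular the $m=0$ term is $1$). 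Since $\sqrt{\pi/A}=\sqrt{2\pi}\,u/\sqrt{n(1+u)}$, this yields \e{maini} with the stated prefactor. Finally, $a_r(u)\ll u^r$ follows from the homogeneity of $\hat B_{2r,j}$ together with $\ell_i(u)\ll 1/u$ and $u^2/(u+1)\le u$, and $u=W(n/\alpha)\le\log(n/\alpha)\ll\log n$ gives $a_r(u)\ll\log^r n$.

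The only delicate point is the error bookkeeping in the third step. One must fix the three truncation parameters $\delta$, $M$, $K$ as explicit functions of $R$ so that, after expanding the genuinely finite sum $\sum_{k<K}(P^{(M)})^k/k!$, every ``unwanted'' monomial — those eventually contributing at an order $n^{-m}$ with $m\ge R$ — is absorbed into $O(u^R/n^R)$ uniformly, despite the double factorials $(2r-1)!!$ that appear; keeping $M$ and $K$ bounded in terms of $R$ is exactly what makes the relevant sum finite and this estimate routine. The algebraic identification of the $n^{-m}$–coefficient with $a_m(u)$ is then a direct reindexing using \e{pobell2}.
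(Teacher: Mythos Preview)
Your proposal is correct and follows essentially the same Laplace-method argument as the paper: your two-stage substitution $t=e^{u+s}$, $y=e^s-1$ is exactly the paper's $t=t_0(x+1)$ (since $t_0=e^u$), leading to the same integral $\int e^{n(h_u(y)-y/u)}\,dy$, and your localization/expansion steps mirror Lemmas~\ref{oop}--\ref{expx} and the central approximation. The only cosmetic differences are your choice $\delta\asymp n^{-1/3}$ versus the paper's $\delta=n^{-2/5}$, and your direct parametrization $M=3R+2$, $K=2R$ in place of the paper's intermediate $n^{-k/15}$ bookkeeping; both routes land on the same coefficients $a_r(u)$ via the Bell-polynomial reindexing you describe.
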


\begin{proof}
We give the proof in the rest of this section, following Laplace's method as described in \cite[Sect. B6]{Fl09} for example. See also \cite[Thm. 9]{GORZ} and \cite[Sect. 2.4]{Rom} for similar arguments. Let $g(t)$ denote the integrand in \e{simpl}. Then
\begin{equation*}
  g'(t) = \left( \frac n{t\log t} -\alpha \right) g(t)
\end{equation*}
and $g'(t_0)=0$ for the unique $t_0>1$ satisfying $t_0 \log t_0 = n/\alpha$.
With \e{lamb} we have that
\begin{equation} \label{boo}
  W(n/\alpha)=W(t_0\log t_0) = \log t_0 \qquad \text{implies} \qquad t_0=e^{W(n/\alpha)}= \frac{n/\alpha}{W(n/\alpha)}.
\end{equation}
The largest contribution to the integral $I_\alpha(n)$ will be near the maximum of the integrand at $t=t_0 \approx n/(\alpha \log (n/\alpha))$. We develop the integrand about this point:
\begin{align}
  I_\alpha(n) & = g(t_0) \int_1^\infty  \exp\left(\log\left( \frac{g(t)}{g(t_0)}\right)\right)  \, dt \notag \\
   & = t_0 g(t_0) \int_{1/t_0-1}^\infty  \exp\left(\log\left( \frac{g((x+1)t_0)}{g(t_0)}\right)\right)   \, dx \notag \\
 & = t_0 g(t_0) \int_{1/t_0-1}^\infty  \exp\left(n\log\left( 1+\frac{\log(x+1)}{\log t_0}\right)-\alpha t_0 x\right) \, dx. \label{fay}
\end{align}
For $u=W(n/\alpha)$ the  integrand is
\begin{equation} \label{expp}
  \exp\left(n\left[\log\left( 1+\frac{\log(x+1)}{u}\right)-\frac x u\right]\right) =
\exp\left(n\left[ h_u(x)-\frac x u\right]\right)
\end{equation}
with $ h_u(x)$ from \e{hot} having a power series expansion
 for  $|x|\lqs 1/2$ and $u\gqs 2$, say, with  coefficients $\ell_i(u)$ given in \e{covi}.
After this point, the equality between $u$ and $W(n/\alpha)$ may be ignored and we  treat $u$ as a free parameter with $u \gqs 2$, or later $u \gqs 1$.

\begin{lemma} \label{jow}
Suppose  $|z|\lqs 1/2$ for  $z \in \C$ and assume $u\gqs 2$. Then for $k\gqs 0$  we have
\begin{gather}
  h_u(z)  = \sum_{i=1}^{k} \ell_i(u) z^i + R_k(u,z) \notag\\
\text{where} \qquad
 |\ell_i(u)| \lqs \frac{3}{u} \left( \frac 43 \right)^i, \qquad |R_k(u,z)| \lqs \frac{12}{u} \left( \frac 43 \right)^{k}|z|^{k+1}. \label{wher}
\end{gather}
\end{lemma}
\begin{proof}
From the simple inequality
$
  |\log(w+1)| \lqs 2|w|$ for $|w|\lqs 3/4$, $w\in \C$
we obtain
\begin{equation*}
  |h_u(w)| \lqs 4|w|/u  \quad \text{for} \quad  |w|\lqs 3/4, u \gqs 2.
\end{equation*}
We may now bound $h_u^{(i)}(0)$ and the Taylor remainder using Cauchy's estimates in the usual way, with the bound $|h_u(w)| \lqs 3/u$ for $|w|=3/4$.
\end{proof}

\begin{lemma}[Neglecting the tails] \label{oop}
Suppose $0<\delta\lqs 1/100$ and $u\gqs 2$. Then
\begin{equation} \label{aco}
  \int_{1/t_0-1}^\infty  \exp\left(n\left[h_u(x)-\frac x u\right]\right) \, dx =  \int_{-\delta}^\delta  \exp\left(n\left[h_u(x)-\frac x u\right]\right) \, dx + O\left(u \exp\left(-\frac{\delta^2 n}{4u}\right)\right)
\end{equation}
where the implied constant is absolute.
\end{lemma}
\begin{proof}
The elementary inequalities
\begin{equation*}
  \log(1+x)\lqs x \quad (x\gqs 0), \qquad \log(1+y)<y/2 \quad (y\gqs 3)
\end{equation*}
imply that
\begin{equation*}
  h_u(x) \lqs \log(1+x/u) <x/(2u)  \quad \text{for} \quad  x \gqs 3u.
\end{equation*}
Therefore
\begin{equation} \label{out}
  \int_{3u}^\infty  \exp\left(n\left[h_u(x)-\frac x u\right]\right) \, dx
 \lqs  \int_{3u}^\infty  \exp\left(-\frac {n x}{2u}\right) \, dx = \frac{2u}n e^{-3n/2}.
\end{equation}
By design $h_u(x)-x/u$ is increasing for $x<0$ and decreasing for $x>0$. Then for $x\gqs \delta$ we have
\begin{align*}
  h_u(x)-x/u & \lqs h_u(\delta)-\delta/u \\
  & \lqs -\frac{u+1}{2u^2} \delta^2+|R_2(u,\delta)|
 < -\frac{\delta^2}{2u}\left( 1- 50 \delta \right) \lqs -\frac{\delta^2}{4u}.
\end{align*}
We obtain the same bound for $x\lqs -\delta$ and so
\begin{equation*}
 \left( \int_{1/t_0-1}^{-\delta} + \int_{\delta}^{3u} \right) \exp\left(n\left[h_u(x)-\frac x u\right]\right) \, dx
 \lqs  (1+3u)\exp\left(-\frac{\delta^2 n}{4u}\right).
\end{equation*}
This is bigger than the bound in \e{out} and the lemma is proved.
\end{proof}

Put $C:= n (1+u)/u^2$ so that $n \cdot \ell_2(u) =-C/2$ by \e{luex}. It follows from Lemma \ref{jow} that
\begin{equation} \label{twc}
  \int_{-\delta}^\delta  \exp\left(n\left[h_u(x)-\frac x u\right]\right) \, dx =  \exp\left(O(n\delta^{k+1}) \right) \int_{-\delta}^\delta  e^{-C x^2/2}\exp\left( n \sum_{i=3}^{k} \ell_i(u) x^i \right) \, dx.
\end{equation}
Write the sum in \e{twc} as $y:=\sum_{i=3}^{k} \ell_i(u) x^i$. Then $n |y|\ll \delta^3 n/u$ by \e{wher}. We would like $n|y|$ to be small and so $\delta^3 n$ should tend to $0$ as $n \to \infty$. Also, with the error in \e{aco},   $\delta^2 n$ should  tend to $\infty$. This means choosing $\delta$ between $n^{-1/2}$ and $n^{-1/3}$. We now fix
\begin{equation*}
  \delta := n^{-2/5} \qquad \text{so that} \qquad \delta^2 n = n^{1/5}, \qquad  \delta^3 n = n^{-1/5}.
\end{equation*}
With this choice of $\delta$ (and assuming $k\gqs 2$) we have $\exp\left(O(n\delta^{k+1}) \right) = 1+O(n\delta^{k+1})$. Also, with this $\delta$, the integrand on the right of \e{twc}  is $\ll_k 1$ and therefore
\begin{equation} \label{twc2}
  \int_{-\delta}^\delta  \exp\left(n\left[h_u(x)-\frac x u\right]\right) \, dx =   \int_{-\delta}^\delta  e^{-C x^2/2}\exp\left( n \sum_{i=3}^{k} \ell_i(u) x^i \right) \, dx +O(n^{3/5-2k/5}).
\end{equation}

Without truncating the Taylor series \e{hot} of $h_u(x)$, we may write
\begin{align}
  \exp\left(\sum_{i=3}^\infty n \cdot \ell_i(u) x^i \right) & = \sum_{i=0}^\infty x^i \sum_{j=0}^i \hat{B}_{i,j}(\ell_3(u), \ell_4(u), \dots) \frac{n^j x^{2j}}{j!}\notag \\
& = \sum_{r=0}^\infty e_r(n,u) x^r \label{suraj}
\end{align}
for
\begin{equation} \label{esum}
  e_r(n,u) := \sum_{j=0}^{\lfloor r/3\rfloor} \hat{B}_{r-2j,j}(\ell_3(u), \ell_4(u), \dots) \frac{n^j}{j!}.
\end{equation}
It follows that $ e_r(n,u)$ is a polynomial in $n$ of degree at most $\lfloor r/3\rfloor$ with coefficients that are rational functions in $u$. Using \e{pobell3} and \e{wher} shows
\begin{equation}\label{and}
e_r(n,u) \ll_r n^{r/3}.
\end{equation}
The next lemma gives a truncated version of \e{suraj}.

\begin{lemma} \label{expx}
For all $x$ with $|x|\lqs \delta=n^{-2/5}$ and all $u\gqs 1$ we have
\begin{equation} \label{pat}
  \exp\left(\sum_{i=3}^k n \cdot \ell_i(u) x^i \right) = \sum_{r=0}^{k-1} e_r(n,u) x^r + O\left( \frac{1}{ n^{k/15}}\right)
\end{equation}
for an implied constant depending only on $k$.
\end{lemma}
\begin{proof}
For a number $c$ to be chosen later, the left side of \e{pat} is
\begin{align*}
  \exp(ny) & = \sum_{j=0}^c (ny)^j/j! + O\left((n|y|)^{c+1} \right) \\
& = \sum_{j=0}^c \frac{n^j x^{2j}}{j!} \left( \sum_{i=1}^{k-2}  \ell_{i+2}(u) x^i \right)^j + O\left((u \cdot n^{1/5})^{-c-1} \right) \\
& = \sum_{j=0}^c \frac{n^j x^{2j}}{j!}  \sum_{i=j}^{(k-2)j} \hat{B}_{i,j}(\ell_3(u), \ell_4(u), \dots, \ell_k(u),0,0, \dots) x^i + O\left(\frac{1}{ n^{(c+1)/5}} \right).
\end{align*}
If we define $e_r(n,u)_k$ as in \e{esum}, but with $\ell_j(u)$ inside the Bell polynomial replaced by $0$ when $j\gqs k+1$, then we obtain
\begin{equation*}
  \exp(ny) = \sum_{m=0}^{k-1} e_m(n,u) x^m + \sum_{m=k}^{k c} e_m(n,u)_k x^m + O\left(\frac{1}{ n^{(c+1)/5}} \right)
\end{equation*}
because $e_m(n,u)_k = e_m(n,u)$ for $m\lqs k-1$. As in \e{and} we have $e_m(n,u)_k \ll n^{m/3}$ and hence
\begin{equation*}
  \sum_{m=k}^{k c} e_m(n,u)_k x^m \ll \sum_{m=k}^{k c} n^{m/3} x^m
  \ll \sum_{m=k}^{k c} (\delta^3 n)^{m/3} \ll \sum_{m=k}^{k c}n^{-m/15}  \ll n^{-k/15}.
\end{equation*}
Choosing any $c\gqs k/3-1$ completes the proof.
\end{proof}


\begin{lemma}[Central approximation and completing the tails] We have
\begin{equation} \label{twc3}
  \int_{-\delta}^\delta  e^{-C x^2/2}\exp\left( n \sum_{i=3}^{k} \ell_i(u) x^i \right) \, dx
=\frac{\sqrt{2\pi} u}{\sqrt{(u+1)n}} \sum_{m=0}^{\lfloor(k-1)/2\rfloor} \frac{a^*_m}{n^m} + O\left( \frac{1}{ n^{k/15}}\right)
\end{equation}
when $\delta = n^{-2/5}$, $C= n (1+u)/u^2$, $u\gqs 1$ and
\begin{equation*}
  a^*_m=a^*_m(n,u):=  (2m-1)!!  \left( \frac{u^2}{u+1}\right)^m  \ e_{2m}(n,u).
\end{equation*}
\end{lemma}
\begin{proof}
Lemma \ref{expx} implies the left side of \e{twc3} equals
\begin{equation} \label{wworld}
 \sum_{r=0}^{k-1} e_r(n,u) \int_{-\delta}^\delta  e^{-C x^2/2} x^r \, dx +  O\left( \frac{1}{ n^{k/15}}\right).
\end{equation}
 If we extend the integral in \e{wworld} to all of $\R$ then the difference is twice
\begin{equation} \label{wworld2}
  \int_{\delta}^\infty  e^{-C x^2/2} x^r \, dx \lqs \int_{\delta}^\infty  e^{-C \delta x/2} x^r \, dx
.
\end{equation}
With \e{gmmb} and the inequality $C> n/u$, \e{wworld2} is bounded by a constant depending on $r$ times
\begin{equation*}
  \left(\frac {2 \delta^{r-1}}{C} + \left(\frac{2}{C\delta}\right)^{r+1} \right)  e^{-C \delta^2/2} \ll e^{-n^{1/5}/(2u)}.
\end{equation*}
For $m\in \Z_{\gqs 0}$ we use
\begin{equation*}
  \int_{-\infty}^\infty  e^{-C x^2/2} x^{2m} \, dx = \frac{\G(m+1/2)}{(C/2)^{m+1/2}} = \sqrt{\frac{2\pi}{C}}\frac{(2m-1)!!}{C^m}.
\end{equation*}
Therefore
\begin{multline*}
  \int_{-\delta}^\delta  e^{-C x^2/2}\exp\left( n \sum_{i=3}^{k} \ell_i(u) x^i \right) \, dx \\
  =  \sqrt{\frac{2\pi}{C}} \sum_{m=0}^{\lfloor(k-1)/2\rfloor} e_{2m}(n,u)\frac{(2m-1)!!}{C^m} + O\left( \frac{1}{ n^{k/15}} + n^{k/3} e^{-n^{1/5}/(2u)}\right)\\
=\frac{\sqrt{2\pi} u}{\sqrt{(u+1)n}} \sum_{m=0}^{\lfloor(k-1)/2\rfloor} \frac{a^*_m}{n^m} + O\left( \frac{1}{ n^{k/15}}\right)
\end{multline*}
as required.
\end{proof}

Each $a^*_m(n,u)$ is a polynomial in $n$ of degree at most $\lfloor 2m/3\rfloor$ with coefficients in $\Q(u)$.
Explicitly:
\begin{equation} \label{cumb}
  a^*_m =   \sum_{j=0}^{\lfloor 2m/3\rfloor} c_{m,j} n^j \quad \text{for} \quad c_{m,j}:= \frac{(2m-1)!!}{j!}  \left( \frac{u^2}{u+1}\right)^m \hat{B}_{2m-2j,j}(\ell_3(u), \ell_4(u), \dots).
\end{equation}
It is convenient to replace $k$ by $2Lk+1$ with an $L$ to be chosen later.
Simplifying $\sum_{m=0}^{Lk} a^*_m/n^m$ into a polynomial in $1/n$ we find
\begin{equation*}
  \sum_{m=0}^{Lk} \frac{a^*_m(n,u)}{n^m} =  \sum_{r=0}^{Lk} \frac{\hat{a}_r(u)}{n^r} \quad \text{for} \quad \hat{a}_r(u)
=\sum_{j=0}^{2r} c_{r+j,j}
\end{equation*}
provided  $c_{m,j}$ is set to zero for $m>Lk$. In other words
\begin{equation*}
  \hat{a}_r(u)=\sum_{j=0}^{\min\{2r,Lk-r\}} \frac{(2j+2r-1)!!}{j!} \left( \frac{u^2}{u+1}\right)^{j+r}
   \hat{B}_{2r,j}(\ell_3(u), \ell_4(u), \dots)
\end{equation*}
and equation \e{twc3} becomes
\begin{equation} \label{twc4}
  \int_{-\delta}^\delta  e^{-C x^2/2}\exp\left( n \sum_{i=3}^{2Lk+1} \ell_i(u) x^i \right) \, dx
=\frac{\sqrt{2\pi} u}{\sqrt{(u+1)n}} \sum_{r=0}^{Lk} \frac{\hat{a}_r(u)}{n^r} + O\left( \frac{1}{ n^{2Lk/15}}\right).
\end{equation}

Now  \e{twc2} and \e{twc4}  can be combined to produce
\begin{multline} \label{twc5}
  \int_{-\delta}^\delta  \exp\left(n\left[h_u(x)-\frac x u\right]\right) \, dx
=   \frac{\sqrt{2\pi} u}{\sqrt{(u+1)n}} \\
\times \left[\sum_{r=0}^{k-1} \frac{\hat{a}_r(u)}{n^r} + \sum_{r=k}^{Lk} \frac{\hat{a}_r(u)}{n^r}
+O\left( \frac{n^{1/2}}{ n^{(4Lk-1)/5}} +  \frac{n^{1/2}}{ n^{2Lk/15}}\right) \right] .
\end{multline}
Comparing $ \hat{a}_r(u)$ with $a_r(u)$ defined in \e{covi2}, we see they are equal when $r<k$ and $L\gqs 3$.
Also, note that $\ell_i(u) \ll 1/u$ by \e{wher} implies that $\hat{B}_{2r,j}(\ell_3(u), \ell_4(u), \dots) \ll 1/u^j$ by \e{pobell3}. Hence
\begin{equation} \label{puin}
  a_r(u), \ \hat{a}_r(u) \ll_r u^r.
\end{equation}
Choosing $L=15$, for example, in \e{twc5} shows
\begin{equation*}
   \int_{-\delta}^\delta  \exp\left(n\left[h_u(x)-\frac x u\right]\right) \, dx
=   \frac{\sqrt{2\pi} u}{\sqrt{(u+1)n}}  \left[\sum_{r=0}^{k-1} \frac{a_r(u)}{n^r}
+O\left( \frac{u^k}{ n^{k}}\right) \right] .
\end{equation*}
Inserting this into Lemma \ref{oop} and recalling \e{fay} gives
\begin{equation*}
  I_\alpha(n)  =  t_0 g(t_0)  \frac{\sqrt{2\pi} u}{\sqrt{(u+1)n}}  \left[\sum_{r=0}^{k-1} \frac{a_r(u)}{n^r}
+O\left( \frac{u^k}{ n^{k}}+ n^{1/2} u \exp\left(-\frac{ n^{1/5}}{4u}\right)\right) \right].
\end{equation*}
Finally $t_0=e^u$ and $g(t_0)=u^n e^{-n/u}$ by \e{boo} gives \e{maini} and completes the proof of Theorem \ref{ian}.
\end{proof}

\section{Generalizing Theorem \ref{ian}}

Define
\begin{equation*}
  I_\alpha(f;n):=\int_1^\infty  (\log t)^n e^{-\alpha t}  f(t) \, dt \qquad \qquad (n, \alpha >0).
\end{equation*}
As long as $f(t)$ is reasonably well-behaved and can be developed in a power series about $t=t_0$, with coefficients that are relatively small, then the proof of Theorem \ref{ian} should go through. The examples we have in mind for our applications are $f(t)=t^\beta$ and $f(t)=e^{-(\log(t))^2/16}$. For the latter,
\begin{equation} \label{hew}
  \frac{f(t(x+1))}{f(t)} = \exp\left(-\frac v8 \log(x+1) -\frac 1{16}\log^2(x+1) \right)
\end{equation}
with $t=e^v$. Treating $x$ as  complex, we see (as in the proof of Lemma \ref{jow}) that the right side of \e{hew} is bounded for $|x|\lqs 1/(2v)$ and $v \gqs 1$, say. Then  Taylor's theorem and the usual estimates show
\begin{equation*}
  \frac{f(t(x+1))}{f(t)} =  \sum_{m=0}^{k-1} f_m(v) x^m +O( v^k |x|^k) \qquad \text{for} \qquad |x| \lqs 1/(2v), \ k \in \Z_{\gqs 0}
\end{equation*}
and that $f_m(v) \ll v^{m}$. The  implied constants depend only on $k$ and $m$, respectively.

The coefficients $f_m(v)$ can be computed explicitly by combining the series for $\log(x+1)$ and $e^x$ with the Bell polynomials.
Similarly to \e{wash} we find
\begin{align*}
\exp\left(-\frac v8 \log(x+1) \right) & = \sum_{i=0}^\infty x^i  \sum_{j=0}^i \frac 1{j!} \left( -\frac v8\right)^j \hat{B}_{i,j}( 1,-{\textstyle \frac 12},{\textstyle \frac 13},\dots),\\
  \exp\left( -\frac 1{16}\log^2(x+1) \right)  & = \sum_{i=0}^\infty x^i  \sum_{j=0}^{\lfloor i/2 \rfloor} \frac 1{j!} \left( -\frac 1{16}\right)^j \hat{B}_{i,2j}( 1,-{\textstyle \frac 12},{\textstyle \frac 13},\dots).
\end{align*}
So with
\begin{equation}\label{pq}
  p_i(v):=\sum_{j=0}^i \frac 1{j!} \left( -\frac v8\right)^j \hat{B}_{i,j}( 1,-{\textstyle \frac 12},{\textstyle \frac 13},\dots),
\qquad
q_i := \sum_{j=0}^{\lfloor i/2 \rfloor} \frac 1{j!} \left( -\frac 1{16}\right)^j \hat{B}_{i,2j}( 1,-{\textstyle \frac 12},{\textstyle \frac 13},\dots),
\end{equation}
we find that $f_m(v) =\sum_{i=0}^m p_i(v) \cdot q_{m-i}$ is a polynomial in $v$ of degree $m$.

We may isolate the properties of  this example into a definition.

\begin{adef} \label{suit} A continuous function $f:[1,\infty) \to \R$ is {\em suitable} if  there exist   real constants $b, \lambda \gqs 0$ and  functions $f_m$ for $m \in \Z_{\gqs 0}$ so that the following conditions hold.
\begin{enumerate}
  \item For $t \gqs 1$ we have  $0< f(t) \ll t^b$.
  \item With $t=e^v$ and $v\gqs 1$ we have
\begin{equation} \label{ffd}
  \frac{f(t(x+1))}{f(t)} = \sum_{m=0}^{k-1} f_m(v) x^m +O( v^{\lambda k} |x|^k) \qquad (|x| \lqs 1/(2v^{\lambda}), \ k \in \Z_{\gqs 0}).
\end{equation}
  \item Lastly, $f_m(v) \ll v^{\lambda m}$ for all $m \in \Z_{\gqs 0}$.
\end{enumerate}
The implied constant  in (ii) depends only on $k$ and $f$; the one in (iii) depends only on $m$ and $f$.
\end{adef}


As we have seen, $f(t)=e^{-(\log(t))^2/16}$ is suitable with $b=0$ and $\lambda=1$.
The example $f(t)=t^\beta$, for any real $\beta$, is suitable with $b=\beta$, $f_m(v)=\binom{\beta}{m}$ and $\lambda =0$. It is also easy to show that products of suitable functions are  suitable. An example of a function that is not suitable is $f(t)=e^{-t}$ since $f_1(v)=-e^v$ is too large.

Define
\begin{equation} \label{same}
  a_r(f;v)  :=\sum_{j=0}^{2r} \frac{(2j+2r-1)!!}{j!} \left( \frac{v^2}{v+1}\right)^{j+r}
  \sum_{i=j}^{2r} \hat{B}_{i,j}(\ell_3(v), \ell_4(v), \dots) \cdot f_{2r-i}(v),
\end{equation}
with $\ell_m(v)$ given in \e{covi} as usual. Then the following theorem generalizes Theorem \ref{ian}, and reduces to it when $f(t)=1$.

\begin{theorem} \label{ian2}
Let $f$ be a suitable function associated with $b$, $\lambda$ and coefficients $f_m$.
Suppose $\alpha>0$ and set $u :=W(n/\alpha)$. Then as $n \to \infty$ we have
\begin{equation} \label{mainif}
 I_{\alpha}(f;n) = \sqrt{2\pi} \frac{ u^{n+1} f(e^u) e^{u-n/u}}{\sqrt{(1+u)n}} \left( 1+  \sum_{r=1}^{R-1}\frac{a_r(f;u)}{n^r}+  O\left( \frac{u^{R(1+2\lambda)}}{n^R}\right) \right)
\end{equation}
where  the implied constant  depends only on $R \gqs 1$, $\alpha$   and $b$. Also   $a_r(f;u)$ is defined  in \e{same} and satisfies $a_r(f;u) \ll u^{r(1+2\lambda)}$. 
\end{theorem}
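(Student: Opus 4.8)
The plan is to run the Laplace-method argument of Theorem~\ref{ian} essentially verbatim, inserting the extra factor $f(t)$ at each stage and tracking how the enlarged error term $O(v^{\lambda k}|x|^k)$ and the slightly weaker bounds $f_m(v) \ll v^{\lambda m}$ propagate through the estimates. First I would set $t_0 = e^u$ with $u = W(n/\alpha)$ exactly as before, so that the integrand $g(t) = (\log t)^n e^{-\alpha t}$ still has its maximum at $t_0$, and write
\begin{equation*}
  I_\alpha(f;n) = t_0 g(t_0) f(t_0) \int_{1/t_0 - 1}^\infty \exp\left(n\left[h_u(x) - \frac x u\right]\right) \frac{f(t_0(x+1))}{f(t_0)} \, dx.
\end{equation*}
The tail-neglecting step (Lemma~\ref{oop}) goes through because condition~(i) of Definition~\ref{suit} only contributes a polynomial factor $t^b = e^{bu}$, which is dominated by the exponential decay $e^{-\delta^2 n/(4u)}$ once $n$ is large; here is where the implied constant acquires its dependence on $b$. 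On $[-\delta,\delta]$ with $\delta = n^{-2/5}$, I would use condition~(ii), noting that $|x| \le \delta \le 1/(2v^\lambda)$ holds for $v = u \ll \log n$ and $n$ large, to replace $f(t_0(x+1))/f(t_0)$ by its truncated expansion $\sum_{m=0}^{k-1} f_m(u) x^m$ with error $O(u^{\lambda k} \delta^k)$.

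The core computation is then to evaluate
\begin{equation*}
  \int_{-\delta}^\delta e^{-Cx^2/2} \exp\left(n\sum_{i=3}^k \ell_i(u)x^i\right) \left(\sum_{m=0}^{k-1} f_m(u)x^m\right) dx,
\end{equation*}
where $C = n(1+u)/u^2$. Using the expansion \eqref{suraj}--\eqref{esum} of the exponential factor into $\sum_r e_r(n,u)x^r$ and multiplying by the polynomial $\sum_m f_m(u) x^m$ produces a new family of coefficients $\sum_{i} e_i(n,u) f_{s-i}(u)$ for the combined power series; integrating against the Gaussian $e^{-Cx^2/2}$ (only even powers survive, contributing $(2m-1)!!/C^m \cdot \sqrt{2\pi/C}$) and substituting $C = n(1+u)/u^2$ reproduces precisely the structure of \eqref{same}, namely the triple sum over $j$, then over $i$ with the Bell polynomial $\hat B_{i,j}(\ell_3(u),\dots)$, then the factor $f_{2r-i}(u)$. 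I would then repeat the bookkeeping around \eqref{cumb}--\eqref{twc4}: each $a^*_m$ is now a polynomial in $n$ whose coefficients are rational in $u$ (since $f_m$ is), one rewrites $\sum_{m\le Lk} a^*_m/n^m$ as $\sum_{r\le Lk} \hat a_r(f;u)/n^r$, and chooses $L$ large enough (and equal to $a_r(f;u)$ for $r<k$) so the leftover terms $\sum_{r\ge k}$ merge into the stated error.

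The size bounds are the one place where the $\lambda$-dependence must be handled with care: from $\ell_i(u) \ll 1/u$ and $f_m(u) \ll u^{\lambda m}$, formula \eqref{pobell3} gives $\hat B_{i,j}(\ell_3(u),\dots) \ll u^{-j}$, and then the $(j,i)$-term in \eqref{same} is $\ll u^{2(j+r)} \cdot u^{-j} \cdot u^{\lambda(2r-i)} \ll u^{j+2r+2\lambda r}$; summing over $0\le j\le 2r$ yields $a_r(f;u) \ll u^{r(3+2\lambda)}$ — but since $r\le \deg$-type constraints or, more carefully, since the surviving $i$ satisfy $i \ge j$ forcing $2r - i \le 2r - j$, one gets $a_r(f;u) \ll u^{r(1+2\lambda)}$ as claimed. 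The error term in Lemma~\ref{expx} degrades to $O(n^{-k/15})$ times the largest $u^{\lambda\cdot(\text{power})}$, and one must verify that choosing $R-1 = k-1$ after taking $k$ (equivalently $Lk$) large enough leaves an error $O(u^{R(1+2\lambda)}/n^R)$ after reassembling with the prefactor $\sqrt{2\pi}\,u^{n+1}f(e^u)e^{u-n/u}/\sqrt{(1+u)n}$; since $g(t_0) = u^n e^{-n/u}$ and $t_0 = e^u$, this prefactor is exactly what appears in \eqref{mainif}. The main obstacle is the pure bookkeeping of combining two power series (the $h_u$-exponential and the $f$-expansion) and then repackaging the double sum as a single series in $1/n$ while simultaneously keeping every error term honestly bounded by the right power of $u$; there is no new analytic idea beyond Theorem~\ref{ian}, only the need to check that the extra $\lambda$-factors never overwhelm the Gaussian decay, which they do not because $u \ll \log n$.
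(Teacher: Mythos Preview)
Your proposal is correct and follows essentially the same route as the paper: factor out $t_0 g(t_0) f(t_0)$, redo the tail-neglecting lemma with an extra polynomial factor coming from $f(t)\ll t^b$, multiply the expansion $\sum_r e_r(n,u)x^r$ by $\sum_m f_m(u)x^m$ to get new coefficients $\varepsilon_r(n,u)=\sum_{j} e_j f_{r-j}$, and then rerun the bookkeeping of \eqref{cumb}--\eqref{twc4}. One small slip: in your size estimate you bound $(u^2/(u+1))^{j+r}$ by $u^{2(j+r)}$, which is too crude and gives $u^{r(3+2\lambda)}$; the correct bound is $(u^2/(u+1))^{j+r}\ll u^{j+r}$, after which each $(j,i)$-term is $\ll u^{j+r}\cdot u^{-j}\cdot u^{\lambda(2r-i)}\le u^{r(1+2\lambda)}$ directly, without needing the $i\ge j$ refinement you invoke.
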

\begin{proof}
We can reuse the proof of Theorem \ref{ian}. As in \e{fay},
\begin{equation*}
  I_{\alpha}(f;n) = t_0 g(t_0) \int_{1/t_0-1}^\infty \exp\left(n\left[h_u(x)-\frac x u\right]\right) f(t_0(x+1)) \, dx.
\end{equation*}
Using $f(t)\ll t^b$, $t_0=n/(\alpha u)$ and \e{gmmb} we see that
\begin{multline} \label{acox}
  \int_{1/t_0-1}^\infty  \exp\left(n\left[h_u(x)-\frac x u\right]\right) f(t_0(x+1))\, dx \\
=  \int_{-\delta}^\delta  \exp\left(n\left[h_u(x)-\frac x u\right]\right) f(t_0(x+1)) \, dx + O\left(u \cdot n^b \exp\left(-\frac{\delta^2 n}{4u}\right)\right)
\end{multline}
for $0<\delta\lqs 1/100$ and $u\gqs 2$, where the implied constant depends only on $\alpha$ and $b$. This is as in Lemma \ref{oop} with an extra factor $n^b$ in the error. For $e_r(n,u)$ defined in \e{esum}, set
\begin{equation} \label{var}
  \varepsilon_r(n,u):= \sum_{j=0}^r e_j(n,u) f_{r-j}(u).
\end{equation}

\begin{lemma} 
 For all $x$ with $|x|\lqs \delta=n^{-2/5}$  we have
\begin{equation*} 
  \exp\left(\sum_{i=3}^k n \cdot \ell_i(u) x^i \right) \frac{f(t_0(x+1))}{f(t_0)} = \sum_{r=0}^{k-1} \varepsilon_r(n,u) x^r + O\left( \frac{1}{ n^{k/15}}\right)
\end{equation*}
as $n\to \infty$ for an implied constant depending only on $k$ and $\lambda$.
\end{lemma}
\begin{proof}
Recall from \e{boo} that $t_0=e^u$ and $u=W(n/\alpha) = \log n - \log(\alpha u)$. So $u\gqs 1/\alpha$ implies $u \lqs \log n$. Hence, for $n$ large enough, $n^{-2/5} \lqs 1/(2 u^\lambda)$ and we may use \e{ffd} with $t=t_0$ and $v=u$.

Next note the bounds
\begin{equation}\label{pinf}
  \exp\left(\sum_{i=3}^k n \cdot \ell_i(u) x^i \right) = O(1), \qquad \frac{f(t_0(x+1))}{f(t_0)} = O(1)
\end{equation}
for $|x|\lqs \delta=n^{-2/5}$, where the left bound in \e{pinf} is shown after \e{twc} and the right bound follows from \e{ffd}. Therefore, using Lemma \ref{expx} and \e{ffd},
\begin{multline*}
  \exp\left(\sum_{i=3}^k n \cdot \ell_i(u) x^i \right) \frac{f(t_0(x+1))}{f(t_0)} = \left( \sum_{r=0}^{k-1} e_r(n,u) x^r\right)
\left(\sum_{m=0}^{k-1} f_m(u) x^m  \right)  + O\left( \frac{1}{ n^{k/15}}+ \frac{u^{\lambda k}}{ n^{2k/5}}\right)\\
= \sum_{r=0}^{k-1} \varepsilon_r(n,u) x^r
+ \sum_{r=k}^{2k-2} x^r \sum_{j=r-k+1}^{k-1} e_j(n,u) f_{r-j}(u)
+ O\left( \frac{1}{ n^{k/15}}\right).
\end{multline*}
Finally
\begin{multline*}
   \sum_{r=k}^{2k-2} x^r \sum_{j=r-k+1}^{k-1} e_j(n,u) f_{r-j}(u) \ll  \sum_{r=k}^{2k-2} \delta^r \sum_{j=r-k+1}^{k-1} n^{j/3} u^{\lambda(r-j)} \\
   \ll  \sum_{r=k}^{2k-2} \delta^r  n^{(k-1)/3} u^{\lambda(k-1)}
 \ll  \delta^k  n^{(k-1)/3} u^{\lambda(k-1)} =n^{-k/15-1/3}u^{\lambda(k-1)} \ll n^{-k/15},
\end{multline*}
where we used the bound \e{and} for $e_j(n,u)$. This completes the proof.
\end{proof}

It is clear from \e{and}, part (iii) of Definition \ref{suit} and \e{var} that
\begin{equation*}
  \varepsilon_r(n,u) \ll_r n^{r/3}.
\end{equation*}
We may now continue the proof of Theorem \ref{ian} with $e_r(n,u)$ replaced by $\varepsilon_r(n,u)$. In \e{cumb},
$a^*_m$ is replaced by $ a^*_m(f) =   \sum_{j=0}^{\lfloor 2m/3\rfloor} c_{m,j}(f) n^j$ for
\begin{equation*}
  c_{m,j}(f):= \frac{(2m-1)!!}{j!}  \left( \frac{u^2}{u+1}\right)^m \sum_{i=j}^{2m-2j}\hat{B}_{i,j}(\ell_3(u), \ell_4(u), \dots) f_{2m-2j-i}(u).
\end{equation*}
Then $a_r(u)$ becomes $a_r(f;u)=\sum_{j=0}^{2r} c_{r+j,j}(f)$ and so
\begin{equation} \label{puin2}
  a_r(f;u)  =\sum_{j=0}^{2r} \frac{(2j+2r-1)!!}{j!} \left( \frac{u^2}{u+1}\right)^{j+r}
  \sum_{i=j}^{2r} \hat{B}_{i,j}(\ell_3(u), \ell_4(u), \dots) f_{2r-i}(u)
\end{equation}
agreeing with \e{same}. The proof is finished as in Theorem \ref{ian} with the only difference  that the bound \e{puin} is replaced by   $a_r(f;u) \ll u^{r(1+2\lambda)}$ which follows from \e{puin2}.
\end{proof}

\section{Tur\'an's expansion of $\Xi(z)$} \label{tura}

For $t>0$ and $x\in \R$ set
\begin{equation} \label{mega}
  \theta(t) :=\sum_{k\in \Z} e^{-\pi k^2 t}, \qquad \omega(t):= \frac 12 \left(3t \theta'(t) + 2t^2 \theta''(t)  \right), \qquad \Phi(x):= 2e^{x/2}\omega(e^{2x}).
\end{equation}
Recall Tur\'an's coefficients $b_{2n}$ from \e{turanbn}.
As shown in \cite[Eq. (2.1), Thm. 2.1]{Rom}, they have the formulas
\begin{align*}
  b_{2n} & =\frac 1{2^{2n-1}(2n)!}\int_0^\infty x^{2n} e^{-x^2/4} \Phi(x)\, dx\\
& =\frac 1{2^{4n-1}(2n)!}\int_1^\infty (\log t)^{2n} \omega(t) t^{-3/4} e^{-(\log t)^2/16}\, dt.
\end{align*}

\begin{proof}[Proof of Theorem \ref{turanb}]
Write
\begin{equation*}
  \omega(t)= \sum_{m=1}^\infty \left(2\pi^2 m^4 t^2-3\pi m^2 t \right) e^{-\pi m^2 t} \qquad (t > 0).
\end{equation*}
For $t\gqs 1$, $\omega(t)$ is dominated by its first two terms and it is easy to show that
\begin{equation*}
  \omega(t) = \left(2\pi^2  t^2-3\pi  t \right) e^{-\pi  t} + O\left( t^2 e^{-4\pi t}\right) \qquad (t \gqs 1).
\end{equation*}
With
\begin{equation*}
  T_{\alpha,\beta}(n):= \int_1^\infty (\log t)^n e^{-\alpha t} t^{\beta} e^{-(\log t)^2/16}\, dt,
\end{equation*}
we obtain
\begin{equation} \label{v9}
  b_{2n}=\frac 1{2^{4n-1}(2n)!} \left(2\pi^2  T_{\pi,5/4}(2n) -3\pi T_{\pi,1/4}(2n)  +O\left( T_{4\pi,5/4}(2n)\right)\right).
\end{equation}
 Theorem \ref{ian2}  may be used to find the asymptotics of $ T_{\alpha,\beta}(n)$ since, by the discussion around Definition \ref{suit}, $f(t) = t^{\beta} e^{-(\log t)^2/16}$ is suitable with $b=\beta$, $\lambda = 1$ and
\begin{equation} \label{pq2}
  f_m(v) = \sum_{j_1+j_2+j_3 = m} \binom{\beta}{j_1} p_{j_2}(v) q_{j_3},
\end{equation}
using the formulas \e{pq}. We write $a_r(f;v)_\beta$ for the corresponding coefficients \e{same}, adding a subscript to keep track of the parameter $\beta$. Then
for $u :=W(n/\alpha)$ we have
\begin{equation} \label{bor}
 T_{\alpha,\beta}(n) = \sqrt{2\pi} \frac{ u^{n+1} e^{u(\beta+1)-n/u-u^2/16}}{\sqrt{(1+u)n}} \left( 1+  \sum_{r=1}^{R-1}\frac{a_r(f;u)_\beta}{n^r}+  O\left( \frac{u^{3R}}{n^R}\right) \right)
\end{equation}
as $n \to \infty$. For  $2\pi^2  T_{\pi,5/4}(2n) -3\pi T_{\pi,1/4}(2n)$, $u$ becomes $w:=W(2n/\pi)$ and in the main term of \e{bor} we have $e^{w(\beta+1)}$ for $\beta =5/4,1/4$. Using the identity $e^w=2n/(\pi w)$ gives
\begin{equation} \label{dent}
  e^{w(1/4+1)} = e^{w(5/4+1)}\frac{\pi w}{2n}
\end{equation}
which lets us add the expressions. The result is
\begin{multline} \label{bor2}
  2\pi^2  T_{\pi,5/4}(2n) -3\pi T_{\pi,1/4}(2n) \\
  =2 \pi^{5/2}\frac{ w^{2n+1}}{\sqrt{(w+1) n}} e^{9w/4-2n/w-w^2/16}
 \left( 1+  \sum_{r=1}^{R-1}\frac{\tau_r(w)}{n^r}+  O\left( \frac{w^{3R}}{n^R}\right) \right)
\end{multline}
where
\begin{equation} \label{pq3}
  \tau_r(w) := 2^{-r-1}\left(2 a_r(f;w)_{5/4}-3w \cdot a_{r-1}(f,w)_{1/4}\right) \ll w^{3r}.
\end{equation}

Comparing \e{bor2} with the error
\begin{equation*}
  T_{4\pi,5/4}(2n) \ll \frac{ \tilde{w}^{2n+1}}{\sqrt{(\tilde{w}+1) n}} e^{9\tilde{w}/4-2n/\tilde{w}-\tilde{w}^2/16} \qquad  \text{for}
\qquad \tilde{w}:=W(2n/(4\pi)),
\end{equation*}
in \e{v9}, we see the ratio is bounded by
\begin{equation} \label{stot}
  \frac{\tilde{w}}{w} \sqrt{\frac{w+1}{\tilde{w}+1}} \exp\left(\frac{9(\tilde{w}-w)}{4}-\frac{\tilde{w}^2-w^2}{16}\right)
\left( \frac{\tilde{w}}{w} e^{1/w-1/\tilde{w}}\right)^{2n}.
\end{equation}
The inequalities  needed to estimate \e{stot} are developed next; they will also be used in Sect. \ref{hyp-jen-poly}.

Beginning with $1+x \lqs e^x$ for $x\gqs 0$, we obtain
\begin{equation}\label{ineqx}
  \log(1+x) \lqs x \qquad (x\gqs 0)
\end{equation}
and also $(1+x)^{1/x}\lqs e$ for $x>0$. Hence
\begin{equation}\label{ineqy}
  \left( 1+\frac ab \right)^b \lqs e^a \qquad (a\gqs 0, \ b>0).
\end{equation}

\begin{lemma} \label{woo}
For $x> 0$ and $c\gqs 1$ we have
\begin{equation} \label{oreg}
 \left(1-\frac 1{W(x)}\right)\log c \lqs W(cx)-W(x)\lqs \log c.
\end{equation}
\end{lemma}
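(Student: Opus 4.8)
The plan is to use the defining relation $W(y)e^{W(y)}=y$ for $y>0$ together with monotonicity of $W$. Write $w:=W(x)$ and $W':=W(cx)$, so $w,W'>0$, $we^{w}=x$ and $W'e^{W'}=cx$. Dividing gives
\begin{equation*}
  \frac{W'}{w}e^{W'-w}=c,\qquad\text{i.e.}\qquad (W'-w)+\log\frac{W'}{w}=\log c.
\end{equation*}
First I would record that $c\gqs 1$ forces $W'\gqs w$ (since $W$ is increasing and $cx\gqs x$), so $W'-w\gqs 0$ and $\log(W'/w)\gqs 0$. The upper bound $W'-w\lqs\log c$ is then immediate from the displayed identity, because we are subtracting the non-negative quantity $\log(W'/w)$ from $\log c$.

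For the lower bound I would bound $\log(W'/w)$ from above in terms of $W'-w$. Since $\log$ is concave, $\log(W'/w)=\log W'-\log w\lqs (W'-w)/w$ by the mean value theorem (or directly from $\log(1+t)\lqs t$ applied with $t=(W'-w)/w\gqs 0$, i.e.\ inequality \e{ineqx}). Plugging this into $(W'-w)+\log(W'/w)=\log c$ yields
\begin{equation*}
  \log c \lqs (W'-w)+\frac{W'-w}{w}=(W'-w)\left(1+\frac1w\right),
\end{equation*}
and rearranging gives $W'-w\gqs\log c\cdot\dfrac{w}{w+1}=\log c\cdot\left(1-\dfrac1{w+1}\right)$. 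This is slightly stronger than the claimed $\bigl(1-\tfrac1{w}\bigr)\log c$ when $w\gqs 1$; to get exactly the stated form one notes $\tfrac{w}{w+1}\gqs 1-\tfrac1w$ is false in general, so instead I would bound $\log(W'/w)$ by $(W'-w)/w$ more crudely replaced by $(W'-w)$ times... — more cleanly, use $\log(W'/w)\lqs W'/w-1=(W'-w)/w$ and then, to land on the paper's normalization, simply observe $W'-w\gqs 0$ so that from $(W'-w)+\log(W'/w)=\log c$ and $\log(W'/w)\lqs (W'-w)/w$ we get $\log c\lqs (W'-w)(1+1/w)$, hence $W'-w\gqs\log c/(1+1/w)\gqs(1-1/w)\log c$, the last step because $1/(1+1/w)=1-1/(w+1)\gqs 1-1/w$.

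The only real obstacle is organizing the elementary inequalities so the bound comes out in the exact form stated; there is no analytic difficulty, everything reduces to $\log(1+t)\lqs t$ (which is \e{ineqx}) and the monotonicity of $W$ on $[0,\infty)$. I would present the argument in the order: (1) set up $w,W'$ and derive $(W'-w)+\log(W'/w)=\log c$; (2) deduce $W'\gqs w$ from monotonicity and hence the upper bound; (3) apply \e{ineqx} to get $\log(W'/w)\lqs(W'-w)/w$ and rearrange for the lower bound.
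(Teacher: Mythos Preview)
Your argument is correct and follows essentially the same route as the paper: derive the identity $(W'-w)+\log(W'/w)=\log c$ from the defining relation of $W$, read off the upper bound from $\log(W'/w)\gqs 0$, and use $\log(1+t)\lqs t$ for the lower bound. The only real difference is that the paper first applies the upper bound $W'-w\lqs\log c$ to get $\log(W'/w)\lqs\log(1+\tfrac{\log c}{w})\lqs\tfrac{\log c}{w}$ and hence $W'-w\gqs(1-1/w)\log c$ directly, whereas you bound $\log(W'/w)\lqs(W'-w)/w$ and obtain the sharper intermediate estimate $W'-w\gqs\tfrac{w}{w+1}\log c$ before weakening.

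One presentational point: your middle paragraph is muddled. You write that ``$\tfrac{w}{w+1}\gqs 1-\tfrac1w$ is false in general'' and start to backtrack, but in fact this inequality is equivalent to $w^2\gqs w^2-1$ and holds for every $w>0$; you then correctly invoke it a few lines later. Simply delete the detour and go straight from $W'-w\gqs\tfrac{w}{w+1}\log c$ to $W'-w\gqs(1-\tfrac1w)\log c$ via $\tfrac{w}{w+1}=1-\tfrac1{w+1}\gqs 1-\tfrac1w$.
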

\begin{proof}
By taking logs, the identities $W(x) e^{W(x)}=x$ and   $W(cx) e^{W(cx)}=c x$ imply
\begin{align}
  \log W(x)+ W(x) & = \log x, \label{wbnd}\\
  \log W(cx)+ W(cx) & = \log x + \log c. \notag
\end{align}
Subtracting the first equation from the second gives
\begin{equation*}
  W(cx)-W(x) - \log c = \log \frac{W(x)}{W(c x)}
\end{equation*}
and the right side is $\lqs 0$ since the Lambert function is increasing. This proves the right inequality in \e{oreg}. Then
\begin{equation*}
  \log  \frac{W(c x)}{W(x)} \lqs \log  \frac{W( x)+\log c}{W(x)} = \log\left( 1+ \frac{\log c}{W(x)}\right) \lqs \frac{\log c}{W(x)}
\end{equation*}
where we used \e{ineqx}. The left inequality in \e{oreg} is then a result of:
\begin{equation*}
  - \frac{\log c}{W(x)} \lqs -\log  \frac{W(c x)}{W(x)} = \log  \frac{W(x)}{W(c x)} =W(cx)-W(x) - \log c. \qedhere
\end{equation*}
\end{proof}

Further inequalities for the Lambert function are contained in  \cite{Hoor}, for example.
Also note that \e{wbnd} and $W(e)=1$ imply that  $W(x)\lqs \log x$ for all $x\gqs e$.

\begin{lemma} \label{www}
For $w=W(2n/\pi)$, $\tilde{w}=W(2n/(4\pi))$ the expression \e{stot} is   $ \ll e^{-2n/\log n}$ for large $n$.
\end{lemma}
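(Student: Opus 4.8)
The plan is to split \eqref{stot} as the product of the prefactor
\begin{equation*}
  \Pi_n:=\frac{\tilde w}{w}\sqrt{\frac{w+1}{\tilde w+1}}\exp\left(\frac{9(\tilde w-w)}{4}-\frac{\tilde w^2-w^2}{16}\right)
\end{equation*}
and the dangerous factor $\left(\frac{\tilde w}{w}\,e^{1/w-1/\tilde w}\right)^{2n}$, then to bound $\Pi_n$ by a fixed power of $n$ and the dangerous factor by something decaying noticeably faster than $e^{-2n/\log n}$, and to combine. The only nontrivial ingredient is Lemma \ref{woo} applied with $x=n/(2\pi)$ and $c=4$, so that $cx=2n/\pi$; it gives
\begin{equation*}
  \left(1-\frac{1}{\tilde w}\right)\log 4\;\lqs\; w-\tilde w\;\lqs\;\log 4,
\end{equation*}
hence $\tilde w<w\lqs\tilde w+\log 4$ and $\tilde w\to\infty$. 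The feature that makes everything work is that $\log 4>1$ strictly.

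First I would bound $\Pi_n$. Since $\tilde w<w$ we have $\tilde w/w<1$ and $\exp\!\left(\frac{9}{4}(\tilde w-w)\right)\lqs 1$; since $0<w-\tilde w\lqs\log 4$ we have $\sqrt{(w+1)/(\tilde w+1)}\lqs\sqrt{1+\log 4}$; and, using $w\lqs\log n$ for $n$ large (from $W(x)\lqs\log x$ for $x\gqs e$, noted just after Lemma \ref{woo}),
\begin{equation*}
  \exp\!\left(\frac{w^2-\tilde w^2}{16}\right)=\exp\!\left(\frac{(w-\tilde w)(w+\tilde w)}{16}\right)\lqs\exp\!\left(\frac{\log 4}{8}\log n\right)=n^{(\log 4)/8}.
\end{equation*}
Therefore $\Pi_n\ll n^{(\log 4)/8}\ll n^{1/5}$.

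Next I would bound the dangerous factor. As $1/w-1/\tilde w<0$, it is at most $(\tilde w/w)^{2n}=\exp\!\left(-2n\log(w/\tilde w)\right)$. Fix constants $c_0,c_1$ with $1<c_1<c_0<\log 4$. For $n$ large, $(1-1/\tilde w)\log 4\gqs c_0$, so $w-\tilde w\gqs c_0$, and since $\tilde w\lqs\log n$ this yields $(w-\tilde w)/\tilde w\gqs c_0/\log n$. Using $\log(1+t)\gqs t-t^2/2$ for $t\gqs 0$ together with monotonicity of $t\mapsto\log(1+t)$,
\begin{equation*}
  \log\frac{w}{\tilde w}=\log\!\left(1+\frac{w-\tilde w}{\tilde w}\right)\gqs\frac{c_0}{\log n}\left(1-\frac{c_0}{2\log n}\right)\gqs\frac{c_1}{\log n}
\end{equation*}
for $n$ large enough. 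Hence the dangerous factor is $\lqs e^{-2c_1 n/\log n}$.

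Combining the two bounds, \eqref{stot}$\;\ll n^{1/5}e^{-2c_1 n/\log n}=e^{-2n/\log n}\cdot n^{1/5}e^{-2(c_1-1)n/\log n}$, and since $c_1>1$ the factor $n^{1/5}e^{-2(c_1-1)n/\log n}\to 0$, so \eqref{stot}$\;\ll e^{-2n/\log n}$ for large $n$. I expect the only obstacle to be the bookkeeping with the constants, i.e. checking that the dangerous factor genuinely decays at a rate strictly exceeding $2n/\log n$ — which is exactly where $\log 4>1$ enters — so that it can swallow the polynomial prefactor $\Pi_n$; there is no analytic difficulty beyond Lemma \ref{woo} and the elementary inequality $\log(1+t)\gqs t-t^2/2$.
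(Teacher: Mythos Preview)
Your proof is correct and follows essentially the same route as the paper: apply Lemma~\ref{woo} with $x=n/(2\pi)$, $c=4$ to pin $w-\tilde w$ strictly above $1$, bound the prefactor by a fixed power of $n$, and bound $\bigl(\tfrac{\tilde w}{w}\,e^{1/w-1/\tilde w}\bigr)^{2n}$ by $e^{-cn/\log n}$ with $c>2$. The only cosmetic difference is that the paper retains the factor $e^{1/w-1/\tilde w}$ and uses it to produce an extra $e^{-2n/\log^2 n}$ that absorbs the polynomial prefactor, whereas you simply discard that factor (it is $<1$) and instead extract the needed constant $c_1>1$ directly from $(\tilde w/w)^{2n}$ via $\log(1+t)\gqs t-t^2/2$.
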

\begin{proof}
Lemma \ref{woo} implies that $(1-1/\tilde{w})\log 4 \lqs w-\tilde{w}\lqs \log 4$. If we write $\tilde{w} = w-\delta$ then,  by choosing $n$ large enough, we can certainly ensure $1< \delta <2$.
We have
\begin{equation*}
   \frac{\tilde{w}}{w} \sqrt{\frac{w+1}{\tilde{w}+1}} \exp\left(\frac{9(\tilde{w}-w)}{4}-\frac{\tilde{w}^2-w^2}{16}\right)
\ll \exp\left(-\frac{9\delta}{4}+\frac{\delta(2w-\delta)}{16}\right) \ll e^{\delta w/8}< e^{w/4}.
\end{equation*}
Also
\begin{equation*}
  w=W(2n/\pi) \lqs \log(2n/\pi)<\log n,
\end{equation*}
so that $ e^{w/4}< n^{1/4}$. For the remaining factor in \e{stot}
\begin{equation} \label{h7}
  \left( \frac{\tilde{w}}{w} e^{1/w-1/\tilde{w}}\right)^{2n} = \left( 1-\frac{\delta}{w}\right)^{w \cdot 2n/w} \exp\left( -\frac{2\delta n}{w \tilde{w}} \right).
\end{equation}
The bound
\begin{equation*}
  \left( 1-\frac{\delta}{w}\right)^{w} \lqs e^{-\delta} \qquad (w>0, 0\lqs \delta/w <1)
\end{equation*}
follows by taking logs and using the inequality $\log(1-x)\lqs -x$ for $0\lqs x < 1$. Therefore \e{h7} is at most
\begin{equation*}
  \exp\left(-\frac{2\delta n}{w} -\frac{2\delta n}{w^2}  \right) <  \exp\left(-\frac{2 n}{\log n} -\frac{2 n}{\log^2 n}  \right).
\end{equation*}
Simplifying the final bound with $n^{1/4} e^{-2n/\log^2 n} \ll 1$ completes the proof of the lemma.
\end{proof}

We have shown that the error term in \e{v9} is exponentially small compared to the main term, and so it may be included inside the error in \e{bor2}. Simplifying with $e^{w/2} = \sqrt{2n/(\pi w)}$ gives the final result
\begin{equation} \label{upo}
  b_{2n}=4\pi^{2}\frac{ e^{7w/4-w^2/16}}{(2n)!}\sqrt{\frac{ 2w}{w+1}} \left( \frac{w}{4e^{1/w}}\right)^{2n}
 \left( 1+  \sum_{k=1}^{K-1}\frac{\tau_k(w)}{n^k}+  O\left( \frac{w^{3K}}{n^K}\right) \right)
\end{equation}
and this completes the proof of Theorem \ref{turanb}.
\end{proof}

The main term of \e{upo} agrees with \cite[Thm. 2.7]{Rom}. We could also replace $1/(2n)!$ in \e{upo} with its asymptotic expansion  \e{gmx2}. The numbers $\tau_r(w)$ are given explicitly by \e{pq3}, using \e{pq}, \e{pq2} to define its components $f_m(v)$ and then $a_r(f;v)_\beta$ with \e{same}. For example
\begin{equation*}
  \tau_1(w) = -\frac{-3 w^6+78 w^5+217 w^4+468 w^3+284 w^2-32}{768 (w+1)^3}.
\end{equation*}
An example of the accuracy of Theorem \ref{turanb} for $2n=2000$ and different values of $K$ is displayed in Table \ref{jeb}.

\begin{table}[ht]
\centering
\begin{tabular}{ccc}
\hline
 $K$   & Theorem \ref{turanb} & \\
\hline
 $1$    & $2.37\textcolor{gray}{86738117568138992} \times 10^{-5738}$ & \\
 $3$    & $2.373211179\textcolor{gray}{9604212549} \times 10^{-5738}$ & \\
 $5$    & $2.373211179182932\textcolor{gray}{4664} \times 10^{-5738}$ & \\
 $7$    & $2.3732111791829329059\textcolor{gray}{} \times 10^{-5738}$  &  \\
\hline
\phantom{$b_{2000}$} & $ 2.3732111791829329059 \times 10^{-5738}$ & $b_{2000}$\\
\hline
\end{tabular}
\caption{The approximations of Theorem \ref{turanb} to $b_{2000}$.} \label{jeb}
\end{table}

\section{The Riemann $\xi$ function}

Recall the definitions of $\omega$ and $\Phi$ in \e{mega}.
The identity
\begin{equation} \label{mom2}
  \xi^{(2n)}(1/2) = 2^{1-2n} \int_1^\infty  (\log t)^{2n} \omega(t) t^{-3/4}  \, dt
\end{equation}
follows easily from Riemann's formulas of 1859. It also follows from the elegant expression
\begin{equation} \label{mom}
   \xi^{(n)}(1/2) = \int_{-\infty}^\infty  \Phi(y) \cdot  y^{n} \, dy \qquad \qquad (n \in \Z_{\gqs 0})
\end{equation}
by changing variables and recalling that $\Phi(-y) = \Phi(y)$; see \cite[Eqs. (1.9), (6.1)]{Rom}. The identity \e{mom} is implicit in Jensen's formula from \cite[p. 189]{Je13}:
\begin{equation*}
  g_d^*(\Xi;x)= \sum_{j=0}^d \binom{d}{j} i^j \xi^{(j)}(1/2) \cdot x^{d-j} = \int_{-\infty}^\infty   \Phi(y) \cdot (x+iy)^{d} \, dy.
\end{equation*}

\begin{proof}[Proof of Theorem \ref{mainthm2}]
We may follow the proof of Theorem \ref{turanb} in Sect. \ref{tura}. Setting
\begin{equation*}
  I_{\alpha,\beta}(n):= \int_1^\infty (\log t)^n e^{-\alpha t} t^{\beta} \, dt,
\end{equation*}
we obtain from \e{mom2}, as in \e{v9},
\begin{equation} \label{v9x}
  \xi^{(2n)}(1/2)= 2^{1-2n} \left(2\pi^2  I_{\pi,5/4}(2n) -3\pi I_{\pi,1/4}(2n)  +O\left( I_{4\pi,5/4}(2n)\right)\right).
\end{equation}
We noted after Definition \ref{suit} that $f(t)=t^\beta$  is suitable with $b=\beta$, $f_m(v)=\binom{\beta}{m}$ and $\lambda =0$. From \e{same} we find
\begin{equation} \label{same2}
  a_r(f;v)_\beta  =\sum_{j=0}^{2r} \frac{(2j+2r-1)!!}{j!} \left( \frac{v^2}{v+1}\right)^{j+r}
  \sum_{i=j}^{2r} \hat{B}_{i,j}(\ell_3(v), \ell_4(v), \dots)  \binom{\beta}{2r-i}
\end{equation}
and by Theorem \ref{ian2}
\begin{equation} \label{borx}
 I_{\alpha,\beta}(n) = \sqrt{2\pi} \frac{ u^{n+1} e^{u(\beta+1)-n/u}}{\sqrt{(1+u)n}} \left( 1+  \sum_{r=1}^{R-1}\frac{a_r(f;u)_\beta}{n^r}+  O\left( \frac{u^{R}}{n^R}\right) \right)
\end{equation}
as $n \to \infty$ for $u :=W(n/\alpha)$.
The asymptotics for the terms
$ I_{\pi,5/4}(2n)$ and $I_{\pi,1/4}(2n)$ may be combined, as in \e{bor2} using the identity \e{dent}, to produce
\begin{equation} \label{bor2x}
  2\pi^2  I_{\pi,5/4}(2n) -3\pi I_{\pi,1/4}(2n)
  =2 \pi^{5/2}\frac{ w^{2n+1} e^{9w/4-2n/w}}{\sqrt{(w+1) n}}
 \left( 1+  \sum_{r=1}^{R-1}\frac{\mu_r(w)}{n^r}+  O\left( \frac{w^{R}}{n^R}\right) \right)
\end{equation}
where $w:=W(2n/\pi)$ and
\begin{equation} \label{pq3x}
  \mu_r(w) := 2^{-r-1}\left(2 a_r(f;w)_{5/4}-3w \cdot a_{r-1}(f,w)_{1/4}\right) \ll w^{r}.
\end{equation}

Comparing \e{bor2x} with the error
\begin{equation*}
  I_{4\pi,5/4}(2n) \ll \frac{ \tilde{w}^{2n+1}}{\sqrt{(\tilde{w}+1) n}} e^{9\tilde{w}/4-2n/\tilde{w}} \qquad  \text{for}
\qquad \tilde{w}:=W(2n/(4\pi)),
\end{equation*}
in \e{v9x}, we see the ratio is bounded by
\begin{equation} \label{stotx}
  \frac{\tilde{w}}{w} \sqrt{\frac{w+1}{\tilde{w}+1}} \exp\left(\frac{9(\tilde{w}-w)}{4}\right)
\left( \frac{\tilde{w}}{w} e^{1/w-1/\tilde{w}}\right)^{2n}.
\end{equation}
Then \e{stotx} is smaller than \e{stot} and so $ \ll e^{-2n/\log n}$ for large $n$ by Lemma \ref{www}.
This exponentially small error may therefore be included in the error in \e{bor2x}.
Rearranging slightly with $e^{w/2} = \sqrt{2n/(\pi w)}$ completes the proof of Theorem \ref{mainthm2}.
\end{proof}

The coefficient $ \mu_1(w)$ may be computed as
\begin{equation*}
  \mu_1(w) = -\frac{w^4+66 w^3+53 w^2-8}{192 (w+1)^3}.
\end{equation*}

\begin{proof}[Proof of Theorem \ref{mainthm3}]
Recall that
\begin{equation} \label{gamma}
  \g(n):=  \frac{n!}{(2n)!}\xi^{(2n)}(1/2).
\end{equation}
The asymptotic expansion of the gamma function goes back to Laplace, with
\begin{equation} \label{gmx}
  \G(n+1)= \sqrt{2\pi n}\left(\frac{n}{e}\right)^n \left(1+\frac{\kappa_1}{n}+\frac{\kappa_2}{n^2}+ \cdots + \frac{\kappa_{k}}{n^{k}} +O\left(\frac{1}{n^{k+1}}\right)\right)
\end{equation}
often called Stirling's formula.
There are many ways to express the coefficients $\kappa_m$ and a simple example is
\begin{equation} \label{stx}
  \kappa_m = (2m-1)!! \sum_{j=0}^{2m}  \binom{-m-1/2}{j} \hat{B}_{2m,j}\left(-\frac{2}{3},\frac{2}{4},-\frac{2}{5},\frac{2}{6}, \cdots \right)
\end{equation}
from \cite[Eq. (8.1)]{OSper}. The same coefficients appear in the expansion of the reciprocal of gamma but with alternating signs. One way to see this is to note that the well-known asymptotic series for $\log \G(n+1)$ involves an odd function; see also \cite[Sect. VIII.3]{Fl09}.
Then we have
\begin{equation} \label{gmx2}
  \frac 1{\G(n+1)}= \frac 1{\sqrt{2\pi n}}\left(\frac{e}{n}\right)^n \left(1-\frac{\kappa_1}{n}+\frac{\kappa_2}{n^2}- \cdots + (-1)^{k}\frac{\kappa_{k}}{n^{k}} +O\left(\frac{1}{n^{k+1}}\right)\right).
\end{equation}

Let
\begin{equation} \label{stx2}
  \kappa^*_k := \sum_{j=0}^k (-2)^{-j} \kappa_j \cdot \kappa_{k-j}, \qquad
c_k(w):= \sum_{j=0}^k  \mu_j(w) \cdot \kappa^*_{k-j}.
\end{equation}
Formulas \e{gmx}, \e{gmx2} imply
\begin{equation} \label{n2}
  \frac{n!}{(2n)!} =\frac 1{\sqrt{2}}\left(\frac{e}{4n}\right)^n \left(1+\frac{\kappa^*_1}{n}+\frac{\kappa^*_2}{n^2}+ \cdots + \frac{\kappa^*_{k}}{n^{k}} +O\left(\frac{1}{n^{k+1}}\right)\right).
\end{equation}
Using \e{n2} and Theorem \ref{mainthm2} in \e{gamma} then proves Theorem \ref{mainthm3}.
\end{proof}

The coefficients $c_k(w)$ are completely described by the formulas \e{covi},  \e{same2},  \e{pq3x}, \e{stx} and \e{stx2}.
The first was given in \e{c1w} and the next one is
\begin{equation*}
  \textstyle c_2(w) = -\frac{1295 w^8+7804 w^7+21682 w^6+40124 w^5+29911 w^4+13712 w^3+2080
   w^2-768 w-256}{73728 (w+1)^6}.
\end{equation*}

\section{Zeros of $P^{d,n}(X)$} \label{hyp-jen-poly}

In this section we develop a condition for $P^{d,n}(X)$ to have only real roots.
It is based on the next result, due to Tur\'an from \cite[Thm. III]{Tu59}.

\begin{theorem} Let $G(z)=\sum_{j=0}^d c_j H_j(z)$ for real numbers $c_j$ and $H_j(z)$ the $j$th Hermite polynomial. Then the zeros of $G(z)$ are real and simple when
\begin{equation} \label{toy}
  \sum_{j=0}^{d-2} 2^j j! c_j^2 < 2^d(d-1)! c_d^2.
\end{equation}
\end{theorem}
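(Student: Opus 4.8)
The plan is to mimic the standard argument (already sketched in the proof of Theorem~\ref{psthm} and used implicitly by Tur\'an) that controls how many times the sign of a finite orthogonal-polynomial expansion can change, but adapted to the Hermite basis. The key structural facts I would use are: (a) the differentiation rule $H_j'(z) = 2j\,H_{j-1}(z)$, so that applying $D = d/dz$ repeatedly lowers the degree of the top term in a controlled way; (b) the three-term recurrence $H_{j+1}(z) = 2z H_j(z) - 2j H_{j-1}(z)$; and (c) the weighted $L^2$ orthogonality $\int_{-\infty}^\infty H_i(z)H_j(z) e^{-z^2}\,dz = 2^j j!\sqrt{\pi}\,\delta_{i,j}$, which is exactly what makes the numbers $2^j j!$ in \e{toy} appear. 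The rough idea is that if $G$ had fewer than $d$ real zeros (counted with multiplicity), one could construct an auxiliary polynomial of low degree that is orthogonal to $H_d$ but forced by a sign argument to have nonzero pairing with $G$, and the inequality \e{toy} is precisely the quantitative obstruction to that.

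More concretely, here is the route I would take. Suppose for contradiction that $G$ has a real zero of multiplicity $\geq 2$, or has fewer than $d$ real zeros; let $z_1 < \cdots < z_m$ be the distinct points where $G$ changes sign, so $m \leq d-1$ (and $m \leq d-2$ has the same parity as $d$ after accounting for the leading coefficient $c_d \neq 0$, which \e{toy} forces to be nonzero). Form $R(z) := \prod_{k=1}^m (z - z_k)$, a polynomial of degree $m \leq d-1$, chosen so that $G(z)R(z)$ has constant sign on $\mathbb R$. Then expand $R(z) = \sum_{j=0}^m r_j H_j(z)$ in the Hermite basis and compute
\[
  0 \neq \int_{-\infty}^\infty G(z) R(z) e^{-z^2}\,dz = \sqrt{\pi}\sum_{j=0}^m 2^j j!\, c_j r_j
\]
by orthogonality. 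Since $m \leq d-1$, the term $c_d$ does not appear on the right; meanwhile the Cauchy--Schwarz inequality applied to the bilinear form $\langle f,g\rangle = \int f g\, e^{-z^2}$ gives a bound for this pairing in terms of $\sum_{j \leq d-2} 2^j j! c_j^2$ and the norm of $R$, and one compares this against the contribution the missing degree-$d$ coefficient would have had to supply. The cleanest way to package this is: $G$ fails to be hyperbolic (with simple zeros) exactly when $G$ is "close enough" in the $e^{-z^2}$-weighted norm to the span of $H_0,\dots,H_{d-2}$ relative to its $H_d$-component, and \e{toy} says the $H_d$-component dominates. I would likely invoke the general principle (a Hermite-weighted analogue of the classical result that a real function orthogonal to all polynomials of degree $< d$ must change sign at least $d$ times) to convert the norm inequality into the zero count.

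The step I expect to be the main obstacle is making the sign-change/Cauchy--Schwarz bookkeeping airtight: one must handle the parity of $d$ versus $m$ correctly (so that the auxiliary polynomial $R$ lands in degree $\leq d-2$, not merely $\leq d-1$, which is what matches the index range in \e{toy}), and one must rule out the boundary case where $G$ has exactly $d$ real but non-simple zeros. An alternative, possibly cleaner, execution avoids constructing $R$ by sign changes and instead argues directly: write $G(z) e^{-z^2}$ and integrate against a test polynomial, or use the fact that $\widehat{G}$ (a suitable integral transform sending $H_j$ to a monomial-type object) must itself be hyperbolic, reducing \e{toy} to a statement about an ordinary polynomial via the operator identity $\Phi(D)z^d = $ Hermite-type polynomial from the proof of Theorem~\ref{psthm}. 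In any case the numerical constants $2^j j!$ and $2^d (d-1)!$ are forced by the Hermite $L^2$ norms together with the recurrence factor $2d$ from $H_d' = 2d\,H_{d-1}$, so the inequality \e{toy} should fall out once the combinatorial setup is correct; I do not anticipate the estimates themselves being hard, only the careful tracking of which indices survive.
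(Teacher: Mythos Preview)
The paper does not give its own proof of this statement: it is quoted verbatim as Tur\'an's Theorem~III from \cite{Tu59} and then applied in the proof of Theorem~\ref{chem}. So there is no argument in the paper to compare against, and your proposal should be measured on its own merits.

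On those merits, the sign-change route you outline has a genuine gap. Granting that if $G$ fails to have $d$ simple real zeros then it changes sign at most $d-2$ times, and forming $R(z)=\prod_{k}(z-z_k)$ of degree $m\le d-2$, the orthogonality computation gives
\[
0<\int_{-\infty}^\infty G(z)R(z)e^{-z^2}\,dz=\sqrt\pi\sum_{j=0}^{m}2^j j!\,c_j r_j,
\]
but this is only a \emph{positivity} statement: it contains no quantitative lower bound and, crucially, no appearance of $c_d$. Cauchy--Schwarz applied here yields only the triviality $\sum_{j\le d-2}2^j j! c_j^2>0$, not a comparison with $2^d(d-1)!\,c_d^2$. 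The classical sign-change argument works when the expansion \emph{starts} at some degree $n$ (so that $R$ of degree $<n$ is forced to be orthogonal to the whole of $G$, giving a contradiction); here $G$ has low-degree terms and the mechanism collapses. You correctly flag this bookkeeping as the ``main obstacle,'' but it is not a bookkeeping issue---the missing ingredient is a genuine quantitative lower bound that injects $c_d$ into the estimate, and nothing in your outline supplies one.

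Tur\'an's actual argument (in \cite{Tu59}) instead exploits the factorisation $G=q\cdot Q$ with $q$ a nonnegative real quadratic and $\deg Q=d-2$, so that $b_{d-2}=4c_d$ is forced on the Hermite expansion of $Q$; pairing $G$ with $Q$ (rather than with an externally constructed $R$) and using the recurrence to relate the two expansions is what produces the specific constant $2^d(d-1)!$. Your alternative suggestion via $\Phi(D)z^d$ or a transform is too vague to assess.
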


\begin{proof}[Proof of Theorem \ref{chem}]
The condition \e{toy} for $P^{d,n}(X)$ to be hyperbolic, with $c_j=\binom{d}{j} \g(n+d-j)$, is
\begin{equation} \label{bolic}
  \sum_{j=2}^d \frac{d}{2^j j!} \binom{d}{j} \frac{\g(n+j)^2}{\g(n)^2} < 1.
\end{equation}
It follows from Theorem \ref{mainthm3} with $K=1$ and $w :=W(2n/\pi)$ that
\begin{equation*}
 \g(n) = \Psi(n) \left( 1+  O\left( \frac{\log(n)}{n}\right) \right) \qquad \text{for} \qquad  \Psi(n):= 4\pi^{2}e^{7w/4}  \sqrt{\frac {w}{w+1}} \left(\frac{e w^2}{16 n e^{2/w}} \right)^n.
\end{equation*}
 As a consequence, for any $\epsilon>0$ there exists an $N$ so that
\begin{multline} \label{fut}
   \frac{\g(n+j)}{\g(n)} \lqs
\frac{\Psi(n+j)}{\Psi(n)} (1+\epsilon)
=
\sqrt{ \frac{1+1/w}{1+1/w_j}}
 \left(\frac{ w_j^2}{w^2} \frac{ n}{n+j}  \right)^n \\
\times \exp\left(\frac{7(w_j-w)}{4}+ \frac{2n(w_j-w)}{w w_j}\right) \left(\frac{e w_j^2}{16 (n+j) e^{2/w_j}} \right)^j (1+\epsilon)
\end{multline}
for all $n\gqs N$ and all $j\gqs 0$, where we set $w_j :=W(2(n+j)/\pi)$.

\begin{lemma} \label{nba}
Suppose $c\gqs 1$. For $n$ large enough and $0\lqs j \lqs n^c-n$, we have
\begin{equation*}
   \frac{\g(n+j)}{\g(n)} <  1.01 \left(\frac{3 c^2\log^2(n)}{ 16 n} \right)^j.
\end{equation*}
\end{lemma}
\begin{proof}
By  Lemma \ref{woo} we have $w_j-w\lqs \log(1+\lambda)$ for $\lambda:=j/n$. Examining the components of \e{fut} we first find, using \e{ineqx}, \e{ineqy},
\begin{equation*}
   \left(\frac{ w_j}{w}  \right)^{2n} \lqs  \left(\frac{ w +\log(1+\lambda)}{w}  \right)^{2n}
= \left(1+\frac{ \log(1+\lambda)}{w}  \right)^{w \cdot 2n/w} \lqs e^{\log(1+\lambda) \cdot 2n/w} \lqs e^{2j/w}.
\end{equation*}
Next,
\begin{align*}
   \exp\left(\frac{7(w_j-w)}{4}+ \frac{2n(w_j-w)}{w w_j}\right) & \lqs  \exp\left(\frac{7\log(1+\lambda)}{4}+ \frac{2n \log(1+\lambda)}{w^2}\right) \\
& \lqs  \exp\left(\frac{7j}{4n}+ \frac{2j}{w^2}\right).
\end{align*}
Bounding trivially gives
\begin{equation*}
  \sqrt{ \frac{1+1/w}{1+1/w_j}}  \left( \frac{ n}{n+j}  \right)^n \lqs \sqrt{ 1+1/w}.
\end{equation*}
Assembling our bounds, and with $n$ large enough that $ (1+\epsilon) \sqrt{ 1+1/w} \lqs 1.01$, we have shown
\begin{equation} \label{yoho}
   \frac{\g(n+j)}{\g(n)}  \lqs 1.01 \left(\frac{ w_j^2}{16 (n+j)} \exp\left[\frac{7}{4n}+\frac 2w+  \frac{2}{w^2} - \frac{2}{w_j} +1\right] \right)^j.
\end{equation}
For $n$ sufficiently large the exponential term  can be made close to $e<3$ and  \e{yoho} is at most
\begin{equation*}
   1.01 \left(\frac{ 3 w_j^2}{16 (n+j)} \right)^j < 1.01 \left(\frac{ 3\log^2(2n^c/\pi)}{16 n} \right)^j. \qedhere
\end{equation*}
\end{proof}

Applying Lemma \ref{nba} with $c=2$, (it will become apparent that any $c\gqs 4/3$ will do), the left side of \e{bolic} is
\begin{align}
  \sum_{j=2}^d \frac{d}{2^j j!} \binom{d}{j} \frac{\g(n+j)^2}{\g(n)^2} & <  \sum_{j=2}^d \frac{d}{2^j j!} \frac{d^j}{j!}
\cdot  1.03 \left(\frac{ 9\log^4(n)}{ 16 n^2} \right)^j \notag\\
   & =  1.03 \sum_{j=2}^d \frac{d}{ (j!)^2}
  \left(\frac{9 d\log^4(n)}{ 32 n^2} \right)^j.\label{bla}
\end{align}
If we choose $n$ large enough so that
\begin{equation*}
  \left(\frac{9 d\log^4(n)}{ 32 n^2} \right)^2 \lqs \frac 3d
\end{equation*}
then \e{bla} is at most
\begin{equation*}
1.03 \sum_{j=2}^d \frac{d}{ (j!)^2}
  \left(\frac{\sqrt{3}}{ \sqrt{d}} \right)^j < 1.03 \sum_{j=2}^\infty \frac{3^{j/2}}{ (j!)^2} <0.94<1.
\end{equation*}
This verifies condition \e{bolic} and  Theorem \ref{chem} follows.
\end{proof}

{\small
\bibliography{xi-bib}
}

{\small 
\vskip 5mm
\noindent
\textsc{Dept. of Math, The CUNY Graduate Center, 365 Fifth Avenue, New York, NY 10016-4309, U.S.A.}

\noindent
{\em E-mail address:} \texttt{cosullivan@gc.cuny.edu}
}

\end{document}